\newtheorem{thm}{Theorem}
\newtheorem{lem}[thm]{Lemma}
\newtheorem{cor}[thm]{Corollary}
\newtheorem{conj}[thm]{Conjecture}
\newtheorem{prop}[thm]{Proposition}
\newcommand{\Rmnum}[1]{\expandafter\@slowromancap\romannumeral #1@}
\begin{document}

\title{Drawing complete multipartite graphs on the plane with restrictions on crossings\thanks{Supported by the National Natural Science Foundation of China (No.\,11301410, 11201440, 11101243), the Natural Science Basic Research Plan in Shaanxi Province of China (No.\,2013JQ1002), the Specialized
Research Fund for the Doctoral Program of Higher Education (No.\,20130203120021), and the Fundamental Research Funds for the Central Universities (No.\,K5051370003, K5051370021).}}
\author{Xin Zhang\thanks{Email address: xzhang@xidian.edu.cn.}\\
{\small Department of Mathematics, Xidian University, Xi'an 710071, China}}
%\date{}

\maketitle

\begin{abstract}\baselineskip  0.6cm
We introduce the concept of NIC-planar graphs and present the full characterization of NIC-planar complete $k$-partite graphs.
\\[.2em]
Keywords: 1-planar graph, NIC-planar graphs, crossing number
\end{abstract}

\baselineskip  0.6cm

\section{Introduction}

All graphs considered in this paper are finite, simple and undirected. By $V(G)$, $E(G)$, $\delta(G)$ and $\Delta(G)$, we denote the vertex set, the edge set, the minimum degree and the maximum degree of a graph $G$, respectively. For a plane graph $G$, $F(G)$ denotes its face set. Set $v(G)=|V(G)|,e(G)=|E(G)|$ and $f(G)=|F(G)|$. The \emph{crossing number} of a graph $G$, denoted by $cr(G)$, is the minimum possible number of crossings with which
the graph can be drawn on the plane. All graph drawings here are optimal, that is, all intersecting edges intersect in a single point that arises from exactly four distinct vertices.
For other undefined notations, we refer the readers to \cite{Bondy.2008}.

A graph is \emph{1-planar} if it can be drawn on a plane so that each edge is crossed by at most one other edge. The concept of the 1-planarity was introduced by Ringel \cite{Ringel} when he considered the vertex-face coloring of planar graphs (corresponding to the vertex coloring of 1-planar graphs). In \cite{Ringel}, Ringel gave the first result on the coloring of 1-planar graphs: every 1-planar graph is 7-colorable. Almost two decades later, Borodin \cite{Borodin,Bo} improved this bound to 6 and showed the sharpness of the new bound. Recently, more and more papers on the coloring problems of 1-planar graphs appear (see the introduction in \cite{ZL} for details). However, compared to the well-established planar graphs, the class of 1-planar graphs is still litter explored.

Now we transfer our attention to the subclasses of 1-planar graphs. Can we find a class of graphs that lies between planar graphs and 1-planar graphs? To answer this question, we need some new notions. A 1-planar drawing is \emph{good} if it contains the minimum number of crossings, and normally, we would assume that every 1-planar drawing is good. We know that every crossing (note that it is not a real vertex) is generalized by two mutually crossed edges, thus for every crossing $c$ there exists a vertex set $S(c)$ of size four, where $S(c)$ consists of the end-vertices of the two edges that generalize $c$. We call $S(c)$ the \emph{cluster set} around $c$.

\begin{prop}
If $c_1$ and $c_2$ are two crossings in a good 1-planar drawing of a 1-planar graph $G$, then $|S(c_1)\cap S(c_2)|\leq 2$.
\end{prop}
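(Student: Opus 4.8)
The plan is to argue by contradiction and, from a hypothetical pair of crossings with $|S(c_1)\cap S(c_2)|\ge 3$ in a good $1$-planar drawing $D$ of $G$, to produce a $1$-planar drawing of $G$ with one fewer crossing, contradicting the minimality (goodness) of $D$. The new drawing will differ from $D$ only in the curve representing a single edge, which we re-route.

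First I would pin down which edge to re-route. Let $e_2$ and $f_2$ be the two edges that cross at $c_2$; their four endpoints are exactly $S(c_2)$, and $\{e_2,f_2\}$ splits $S(c_2)$ into two pairs. Since at least three of these four vertices lie in $S(c_1)$, the pigeonhole principle forces one of $e_2,f_2$ -- say $e_2=ac$ -- to have \emph{both} endpoints in $S(c_1)$. Now $ac$ is crossed at $c_2$, so by $1$-planarity it is not crossed at $c_1$; hence $a$ and $c$ are endpoints of two \emph{distinct} edges meeting at $c_1$, say $au$ and $cw$, and $\{a,u,c,w\}=S(c_1)$ consists of four distinct vertices. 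Because $au$ and $cw$ cross transversally at $c_1$, their ends interleave in the cyclic order around $c_1$, so the end of $au$ at $a$ and the end of $cw$ at $c$ are consecutive there; let $\Lambda$ be the sector of a small disc about $c_1$ between these two ends (it contains no other edge-end).

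Second I would re-route $ac$: delete it from $D$ and redraw it as the arc that runs alongside $au$ on the side bounding $\Lambda$ from $a$ to a point near $c_1$, passes through $\Lambda$, and then runs alongside $cw$ on the side bounding $\Lambda$ from near $c_1$ to $c$. Here $1$-planarity does the work: $au$ and $cw$ are each crossed only at $c_1$ and contain no vertex in their interiors, so a sufficiently thin strip along $[a,c_1]\subseteq au$ and $[c_1,c]\subseteq cw$ on the $\Lambda$-side meets no edge of $D-ac$; a short face-tracking argument shows this strip lies inside a single face of $D-ac$, so the re-routed $ac$ crosses nothing. Since the old $ac$ carried exactly one crossing (namely $c_2$), the new drawing $D'$ has $cr(D')=cr(D)-1$ and is still $1$-planar, contradicting the goodness of $D$; therefore $|S(c_1)\cap S(c_2)|\le 2$.

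The step I expect to need care is the ``no new crossing'' claim, i.e. that the re-routed edge really stays inside one face of $D-ac$ and the thin tube along $au\cup cw$ never gets ``pinched'' by another edge; this is precisely where $1$-planarity (each of $au,cw$ crossed at most once, hence with otherwise clean interiors) and the consecutiveness of the $a$- and $c$-ends at $c_1$ are both essential, while the rest (the pigeonhole step, the impossibility of $ac$ being crossed at both $c_1$ and $c_2$, and verifying that $cr$ drops and $1$-planarity is preserved) is routine. I would also remark that the subcase $|S(c_1)\cap S(c_2)|=4$ admits a purely topological proof: the four shared vertices then span a $4$-cycle of $G$ whose two pairs of opposite edges cross, one pair at $c_1$ and the other at $c_2$, and by $1$-planarity these are its only crossings, so that cycle is drawn as a closed curve whose self-intersections occur in the cyclic order $c_1,c_2,c_1,c_2$ -- a (two-crossing, interlaced) Gauss code realised by no closed curve in the plane.
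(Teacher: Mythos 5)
Your argument is correct and is essentially the paper's own proof: for $|S(c_1)\cap S(c_2)|=3$ the paper likewise singles out the edge crossing at $c_2$ whose two ends lie in $S(c_1)$ on the two different edges of $c_1$, and reroutes it close to those two edges through the crossing point $c_1$ (possible since, by $1$-planarity, those edges are crossed nowhere else), thereby removing a crossing and contradicting goodness. The only difference is that your pigeonhole set-up lets the same rerouting also dispose of the case $|S(c_1)\cap S(c_2)|=4$, which the paper instead dismisses separately by asserting that a second crossing among the same four vertices is impossible --- the topological fact you sketch via the interlaced Gauss code.
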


\begin{proof}
Let $S(c_1)=\{v_1,v_2,v_3,v_4\}$ and let $v_1v_2$ crosses $v_3v_4$. If $|S(c_1)\cap S(c_2)|=4$, then either $v_1v_3$ crosses $v_2v_4$ or $v_1v_4$ crosses $v_2v_3$, which are impossible. If $|S(c_1)\cap S(c_2)|=3$, then let $S(c_2)=\{v_1,v_2,v_3,v_5\}$ with $v_5\neq v_4$. By the definition of 1-planarity, $v_1v_2$ cannot be the vertex that is incident with $c_2$, thus we assume, without loss of generality, that $v_1v_3$ crosses $v_2v_5$ at $c_2$. We redrawn the edge $v_1v_3$ so that $v_1,v_3$ and $c_1$ form a closed area that has no interior vertex. This can be done since $c_1$ is not a real vertex of $G$ and the edges $v_1v_2,v_3v_4$ cannot be crossed any more. After doing so, we reduce the number of crossings by one, which contradicts the fact that the current drawing is good. Therefore, $|S(c_1)\cap S(c_2)|\leq 2$.
\end{proof}

In view of the above proposition, we can naturally define two class of graphs. First, we consider the graph that satisfies $|S(c_1)\cap S(c_2)|=0$ for every two crossings $c_1$ and $c_2$. In fact, this class of graphs has already been investigated by Kr\'al and Stacho \cite{Kral} and Zhang \emph{et al.}\,\cite{ZL-CEJM,ZLY} since 2010 under the notion of plane graphs with independent crossings, or IC-planar graphs for short. In particular, Zhang and Liu \cite{ZL-CEJM} showed that $e(G)\leq 13v(G)/4-6$ for every IC-planar graph $G$ and thus every IC-planar graph $G$ contains a vertex of degree at most 6 (the bound 6 is sharp because of the existence of a 6-regular IC-planar graph (see Fig.\,1 of \cite{ZL-CEJM})). Second, there is a class of graphs that satisfies $|S(c_1)\cap S(c_2)|\leq 1$ for every two crossings $c_1$ and $c_2$. Actually, this class of graphs is just the one we are introducing in this paper. From now on, we call such graphs \emph{plane graphs with near independent crossings}, or \emph{NIC-planar graphs} for short. Let $G$ be an NIC-planar graph and $uv$ be an edge of $G$. If $uv$ is crossed by at least two other edges, that is, $uv$ is incident with at least two crossings $c_1$ and $c_2$, then it is easy to see that $|S(c_1)\cap S(c_2)|\geq 2$, a contradiction. This implies that $G$ is 1-planar. Therefore, the class of NIC-planar graphs lies between IC-planar graphs and 1-planar graphs.

We know that a graph is planar if and only if it contains no $K_5$-minors or $K_{3,3}$-minors, that is to say, the class of planar graphs is minor closed. However, the classes of IC-planar graphs, NIC-planar graphs and 1-planar graphs are not minor closed. Indeed, take a plane drawing of $G$ and then for every crossing $c$ at which $v_1v_2$ crosses $v_3v_4$, place one new 2-valent vertex on each of the segments $cv_1,cv_2,cv_3$ and $cv_4$ (note that this operation is iterative), we then obtain an IC-planar subdivision of $G$, and thus also an NIC-planar and a 1-planar subdivision. Therefore, for any graph $H$, there exists an IC-planar graph (so an NIC-planar graph and a 1-planar graph) that contains an $H$-minor. This fact brings us a big obstruction to recognize those superclasses of planar graphs. In fact, it has already been proved by Korzhik and Mohar \cite{KM} that testing the 1-planarity is NP-hard, and we guess the following:

\begin{conj}\label{conj}
Testing the IC-planarity and the NIC-planarity are NP-hard.
\end{conj}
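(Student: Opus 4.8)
The plan is to prove NP-hardness by a polynomial-time reduction from the recognition of 1-planar graphs, shown NP-hard by Korzhik and Mohar \cite{KM}. Given a graph $G$, I would build a graph $G^{*}$ by replacing every edge $uv\in E(G)$ with a \emph{pipe} gadget $P_{uv}$: a planar graph having $u,v$ as its only terminals and $\Theta(e(G))$ further vertices that are private to $P_{uv}$. The decisive requirement on $P_{uv}$ is \emph{rigidity} --- it must be so tightly connected (for instance 3-connected, or carrying small attached anchors that pin down its embedding) that in \emph{every} 1-planar drawing of $G^{*}$ it is forced to be drawn essentially as a crossing-free tube with $u$ and $v$ at its two ends. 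Insisting on rigidity is essential: a naive substitution, such as replacing $uv$ by a long subdivided path or by a ladder, would let the gadget absorb many crossings, which would break the backward direction of the reduction, since a sufficiently subdivided copy of any graph is 1-planar. A rigid pipe cannot be opened up in this way, so it behaves exactly like a ``fat edge'' that can be traversed by at most one crossing, and that crossing can be forced to occur between two designated interior rungs lying deep inside two different pipes.

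Granting such a pipe, the two directions of the reduction are routine. For the forward direction, start from a 1-planar drawing $D$ of $G$; draw each $P_{uv}$ inside a thin ribbon along the arc that represented $uv$ in $D$, and at the (at most one) crossing point on that arc split the ribbon so that exactly one interior rung of $P_{uv}$ crosses exactly one interior rung of the partner pipe while the rest of the ribbon is pushed aside. Each crossing of the resulting drawing then has a cluster set consisting of four pipe-private vertices; since distinct pipes share no private vertex and, because each edge of $G$ is crossed at most once in $D$, distinct crossings involve four distinct pipes, all cluster sets are pairwise disjoint, so $G^{*}$ is not merely NIC-planar but IC-planar. For the backward direction, take an NIC-planar drawing of $G^{*}$, which by the discussion preceding Conjecture~\ref{conj} is in particular 1-planar; rigidity forces each pipe to be a simple tube carrying at most one crossing, so contracting every tube to a curve yields a 1-planar drawing of $G$. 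Hence $G$ is 1-planar if and only if $G^{*}$ is IC-planar if and only if $G^{*}$ is NIC-planar, and this single reduction proves both parts of the statement. Membership in NP is clear --- a drawing, encoded by its vertex rotations together with the crossing pattern along each edge, is a polynomial-size certificate --- so the two problems would in fact be NP-complete.

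The technical heart, and the step I expect to be the main obstacle, is establishing the rigidity lemma: that in every 1-planar drawing of $G^{*}$ each pipe is drawn as intended, no pipe is crossed more than once, and --- equally important --- no crossing can slip in near a pipe terminal, which would both violate the ``at most one crossing on each edge of $G$'' bound after contraction and, since a terminal is shared by several pipes, destroy the independence of crossings. This is precisely the kind of delicate local-topology argument that underlies the existing hardness proofs for 1-planarity and its variants. A reasonable route is to first carry out everything for graphs carrying a prescribed rotation system, reducing instead from the (also NP-hard) problem of testing 1-planarity with a fixed rotation system, where each pipe's embedding is pinned down from the outset and the rigidity argument is cleanest; one then removes the rotation-system hypothesis by attaching 3-connected planar anchor gadgets that fix each pipe's rotation up to a global reflection, and checks that the residual freedom --- overall reflections and the exact position of the single crossing inside a pipe --- cannot affect the disjointness of cluster sets.
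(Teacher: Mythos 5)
The statement you are addressing is left as a conjecture in the paper: the author offers no proof, and only remarks that Theorems~\ref{non-NIC} and \ref{nonIC} (the existence of infinitely many minimal obstructions) ``support'' it. So there is no paper proof to compare against, and your text must be judged as a stand-alone argument. As it stands it is a plan rather than a proof, and the gap you yourself flag --- the ``rigidity lemma'' --- is not a routine technicality but the entire content of the statement, and the mechanisms you propose for it do not deliver what the reduction needs. Three-connectivity of a pipe $P_{uv}$, or anchors fixing its rotation system, constrain the planar embedding of the pipe itself; they do not bound how many \emph{foreign} edges may cross it. A pipe with $\Theta(e(G))$ private vertices has $\Theta(e(G))$ edges, and in a 1-planar (indeed IC-planar) drawing each of these edges may be crossed once by an edge of a different pipe, with all the resulting cluster sets pairwise disjoint --- so neither 1-planarity nor the IC/NIC condition prevents a single pipe from absorbing many crossings. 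Consequently the backward direction collapses: contracting a pipe crossed many times does not give an edge of $G$ crossed at most once, and one cannot rule out that $G^{*}$ is IC-planar while $G$ is not 1-planar. This is exactly the failure mode the paper itself points out when it observes that placing 2-valent vertices on edges turns \emph{any} graph into an IC-planar subdivision; your pipes, being long planar gadgets with only the two terminals shared, are structurally the same kind of slack, and calling them ``rigid'' does not remove it. Likewise, the claim that any surviving crossing must occur between two interior rungs away from the terminals is asserted, not argued, and it is needed both for the ``one crossing per edge of $G$'' count and for cluster-set disjointness.

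In short, the proposal reduces the conjecture to an unproven lemma whose statement, with the gadgets as described, is false or at least unsupported; a correct hardness proof needs gadgets that actively forbid extra crossings (e.g.\ by saturating the edge budget locally or by forcing crossings only at designated small windows), which is precisely the delicate machinery in the Korzhik--Mohar argument \cite{KM} and is not reproduced here. The forward direction and the NP-membership remark are fine but easy; without a genuine replacement for the rigidity lemma the argument does not establish NP-hardness of either IC-planarity or NIC-planarity, and the statement rightly remains a conjecture in this paper.
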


Now, how can we determine whether a given graph is IC-planar or NIC-planar? There are some feasible ways, one of which is to show that it has a large number of edges. For example, if we can prove that $e(G)>13v(G)/4-6$, then $G$ is not IC-planar. For NIC-planar graphs, we can show that $e(G)\leq 18(v(G)-2)/5$, the proof of which is left to the next section, thus graphs satisfy $e(G)> 18(v(G)-2)/5$ are not NIC-planar graphs. Note that every subgraph of an IC-planar graph or an NIC-planar graph is IC-planar or NIC-planar, thus if we can show that $G$ contains a non-IC-planar small graph or a non-NIC-planar small graph then $G$ is non-IC-planar or non-NIC-planar. Therefore, looking for non-IC-planar graphs and non-NIC-planar graphs with small number of vertices and edges seems helpful.
In the next section, we are to present the full characterizations of NIC-planar and IC-planar complete $k$-partite graphs.

\section{Main results and their proofs}
\newcommand{\gx}{G^{\times}}

In this section, we always assume that every NIC-planar graph and IC-planar graph has been drawn on the plane so that its NIC-planarity or IC-planarity is satisfied and the number of crossings is as small as possible. The $associated$ $plane$ $graph$ $\gx$ of $G$ is the plane graph that is obtained from $G$ by turning all crossings of $G$ into new $4$-valent vertices. We call the new added $4$-valent vertices in $\gx$ \emph{false vertices} and the faces incident a false vertex \emph{false faces}. If a vertex or face in $\gx$ is not false, then we call it \emph{true}.

\begin{lem}\label{cross-lem}
If $G$ is an NIC-planar graph, then $cr(G)\leq v(G)-2-\frac{1}{2}f_T(\gx)$, where $f_T(\gx)$ is the number of true faces in the associated plane graph $\gx$ of any NIC-planar drawing of $G$.
\end{lem}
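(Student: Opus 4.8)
The plan is to apply Euler's formula to the associated plane graph $\gx$ and then bound the relevant quantities. Let $n = v(G)$, $m = e(G)$, and $c = cr(G)$. Since $\gx$ is obtained from $G$ by replacing each crossing with a $4$-valent false vertex, we have $v(\gx) = n + c$ and $e(\gx) = m + 2c$ (each crossing splits two edges into four, adding two edges). Euler's formula gives $v(\gx) - e(\gx) + f(\gx) = 2$, so $f(\gx) = 2 + (m + 2c) - (n + c) = 2 + m + c - n$.

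The key step is to get a good lower bound on $f(\gx)$ by separating true and false faces and using a discharging-style count on the false faces. Each false vertex has degree $4$ in $\gx$ and is incident to $4$ false faces (with multiplicity). More importantly, I want to argue that false faces are "expensive" in terms of edge-incidences. The NIC-planarity hypothesis is exactly what controls how crossings can cluster: by the condition $|S(c_1) \cap S(c_2)| \le 1$, the two crossed edges at a crossing $c$ cannot themselves be crossed again, so in $\gx$ each false vertex $x$ has its four incident (half-)edges being uncrossed in $G$, and the four faces around $x$ cannot be "shared" too much with other false vertices. I would like to show something like: if $f_F(\gx)$ denotes the number of false faces, then $4c \le $ (sum of something over false faces) which, combined with the fact that every face has length at least $3$, forces $f_F(\gx) \ge 2c$ — or more precisely that the false faces contribute enough to the face count. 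Concretely, count incidences between false vertices and false faces: this sum is exactly $4c$, and since by NIC-planarity no two false vertices are adjacent (adjacent false vertices would force a crossed edge to be crossed again, contradicting $1$-planarity, which we established holds for NIC-planar graphs), each false face is incident to exactly one false vertex, and a false face incident to a false vertex $x$ uses up two of the four edge-ends at $x$; hence each false vertex is incident to at most... here I need that each false face "consumes" angle at $x$ so that $x$ lies on at most $4$ false faces but these are genuinely distinct and each such face is charged once — giving $f_F(\gx) \ge$ (roughly) $2c$ after accounting for the possibility that a face wraps around $x$. Let me instead aim for the cleaner inequality $f_F(\gx) \ge 2c$: then $f(\gx) = f_T(\gx) + f_F(\gx)$ gives $2 + m + c - n = f_T(\gx) + f_F(\gx) \ge f_T(\gx) + 2c$, which rearranges to $c \le n - 2 - f_T(\gx) + (m - ?)$ — so this alone is not enough and I also need an upper bound on $m$ in terms of $n$.

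Here is the correct route. I will use that $\gx$ is a plane graph in which every face has length $\ge 3$, but I will get more from the false faces. Since the false vertices form an independent set (by $1$-planarity), and the four edges at each false vertex are uncrossed, a careful local analysis shows each false vertex contributes at least... Actually the cleanest version: double-count edge–face incidences in $\gx$. We have $\sum_{\phi \in F(\gx)} \mathrm{len}(\phi) = 2 e(\gx) = 2(m + 2c)$. Every true face has length $\ge 3$. For false faces, I want the average to be larger, say each false face has length $\ge 3$ as well but there are few of them, OR I bound $m$ directly. The combinatorial heart is: each crossing $c_i$, with its four uncrossed incident segments, can be "charged" to produce the inequality $m + 2c \le 3(v(\gx) - 2) = 3(n + c - 2)$ only if all faces are triangles, which won't hold; instead I expect the bound $c \le n - 2 - \tfrac12 f_T(\gx)$ to follow from: $2 e(\gx) \ge 3 f_F(\gx) + 3 f_T(\gx)$ is too weak, so I replace it with the observation that each false face, being incident to a (unique) false vertex through two consecutive edges, together with the independence of false vertices, lets me conclude $f_T(\gx) \ge f_F(\gx) - 2c + \text{(something)}$...

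Let me state the plan at the right level of abstraction and flag the obstacle honestly. The main obstacle is precisely the local structural lemma about false faces: I must show that the number of true faces is controlled from below, equivalently that $f(\gx) \ge 2c + f_T(\gx)$ is not quite it — rather I claim $f_F(\gx) + f_T(\gx) = f(\gx) = 2 + m + c - n$ and separately, since each of the $4c$ false-vertex/false-face incidences lies on a distinct angular sector and false vertices are non-adjacent, one gets $f_F(\gx) \le 4c$ (trivial) but I need a matching bound forcing enough true faces. The decisive inequality I aim to prove is
\[
2 c \le v(\gx) - 2 - f_T(\gx) + \bigl(f_F(\gx) - 2c\bigr),
\]
i.e. that the false faces, though there can be up to $4c$ of them, are "paid for" by extra edges so that $m + 2c \ge 3(f_F(\gx)) / 1 + \dots$ — so the honest statement of the main step is: \emph{prove $m + 2c \le 3(n + c - 2) - (f_F(\gx) - 2c)$}, equivalently $f_F(\gx) + (m+2c) \le 3(n+c-2) + 2c$, which after substituting $f_F(\gx) = 2 + m + c - n - f_T(\gx)$ collapses exactly to $c \le n - 2 - \tfrac12 f_T(\gx)$. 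Thus everything reduces to establishing the edge-count inequality $e(\gx) \le 3 v(\gx) - 6 - \tfrac12 f_T(\gx) + \tfrac12 f_F(\gx) - c$, which I would prove by the discharging/incidence argument on $\gx$ outlined above, using $1$-planarity (inherited from NIC-planarity) to guarantee false vertices are pairwise non-adjacent and their incident edges uncrossed. I expect that last structural accounting — pinning down exactly how many true versus false faces each false vertex forces into existence — to be the only real work; the Euler-formula bookkeeping around it is routine.
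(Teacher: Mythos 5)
Your write-up never actually proves the lemma: it is an exploration that ends by deferring exactly the step where all the content lies. The paper's proof is two lines once one key count is in hand: every face of the associated plane graph $G^{\times}$ has degree at least $3$, so $2e(G^{\times})=\sum_f d(f)\ge 3f(G^{\times})=3f_T(G^{\times})+3f_F(G^{\times})$; combined with Euler's formula $e(G^{\times})=v(G^{\times})+f(G^{\times})-2$, with $v(G^{\times})=v(G)+cr(G)$, and with the fact that the number of false faces is $f_F(G^{\times})=4\,cr(G)$ (each crossing becomes a $4$-valent false vertex whose four incident faces are distinct false faces, not shared with any other false vertex), this yields $cr(G)\le v(G)-2-\tfrac12 f_T(G^{\times})$ immediately. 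The direction that matters is the lower bound $f_F(G^{\times})\ge 4\,cr(G)$; the trivial direction is $\le$.

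Your proposal circles this point without landing on it. You first aim for $f_F(G^{\times})\ge 2\,cr(G)$ and correctly observe it is too weak; you then assert that each false face is incident to exactly one false vertex, justified only by the false vertices forming an independent set — but non-adjacency does not give this, since a face of length at least $4$ can have two non-adjacent false vertices on its boundary, so the claim needs a genuine argument (this is precisely where the structure of the drawing must be used, and it is the assertion the paper makes as $f_F(G^{\times})=4\,cr(G)$). Finally, you ``reduce'' the lemma to an edge-count inequality $e(G^{\times})\le 3v(G^{\times})-6-\tfrac12 f_T(G^{\times})+\tfrac12 f_F(G^{\times})-cr(G)$ which you explicitly leave unproven (``the only real work''), and the claimed algebraic collapse of your intermediate inequality to $cr(G)\le v(G)-2-\tfrac12 f_T(G^{\times})$ does not check out when one substitutes $f_F(G^{\times})=2+e(G)+cr(G)-v(G)-f_T(G^{\times})$: one obtains a different inequality, not the statement. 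So the proposal has a genuine gap: the decisive counting fact about false faces is neither correctly stated nor proved, and without some such lower bound on $f_F(G^{\times})$ the Euler-formula bookkeeping alone cannot deliver the bound.
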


\begin{proof}
It is easy to see that $2e(\gx)=\sum_{f\in F(\gx)}d(f)\geq 3f(\gx)=3f_T(\gx)+3f_F(\gx)$, where $f_F(\gx)$ denote the number of false faces in $\gx$. Since $e(\gx)=v(\gx)+f(\gx)-2$ by Euler's formula, $f_F(\gx)=4cr(G)$ and $v(\gx)=v(G)+cr(G)$, we have $cr(G)\leq v(G)-2-\frac{1}{2}f_T(\gx)$.
\end{proof}

\begin{thm}\label{cross-thm}
$cr(G)\leq \frac{3}{5}(v(G)-2)$ for any NIC-planar graph $G$.
\end{thm}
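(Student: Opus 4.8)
The plan is to use Lemma~\ref{cross-lem} together with a lower bound on $e(G)$ in terms of $cr(G)$ that is forced by the near-independence of the crossings. First I would observe that one may assume $G$ to be \emph{maximal NIC-planar}, meaning that adding any edge on the same vertex set destroys NIC-planarity: any NIC-planar graph is a spanning subgraph of such a $\widehat G$, and since deleting edges cannot increase the crossing number we have $cr(G)\le cr(\widehat G)$ with $v(G)=v(\widehat G)$, so it is enough to bound $cr(\widehat G)$. So assume $G$ is maximal NIC-planar, fix an NIC-planar drawing of it, pass to the associated plane graph $\gx$, and let $c$ denote the number of crossings in this drawing (so $cr(G)\le c$).

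The heart of the argument is the claim that the four vertices of each cluster set $S(c_i)=\{u,v,x,y\}$, where $uv$ crosses $xy$ at the crossing $c_i$, induce a copy of $K_4$ in $G$. The edges $uv,xy$ lie in $G$ by definition. For each of the four remaining pairs, say $\{u,x\}$, the corner of $\gx$ between the half-edges $c_iu$ and $c_ix$ sits inside one face of $\gx$, and the boundary walk of that face runs through both $u$ and $x$; hence if $ux\notin E(G)$ one could draw $ux$ inside that face, producing a drawing of $G+ux$ with exactly the same crossings --- still $1$-planar, still NIC --- and contradicting maximality. Thus all six pairs of $S(c_i)$ are edges of $G$.

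Next I would check that the $6c$ edges obtained from the $c$ crossings ($2$ crossed edges and $4$ ``chords'' per crossing) are pairwise distinct, which gives $e(G)\ge 6c$. Each crossed edge is crossed exactly once (NIC-planarity implies $1$-planarity), so it belongs to a unique crossing; a chord $ab$ of $c_i$ cannot be crossed, since a crossing $c_j$ on it would put two vertices $a,b$ into $S(c_i)\cap S(c_j)$, violating $|S(c_i)\cap S(c_j)|\le 1$; and the same inequality forbids two distinct crossings from sharing a chord. Hence the chords are disjoint from the crossed edges and from one another across crossings, so indeed $e(G)\ge 6c$. On the other hand, deleting one edge at each of the $c$ crossings leaves a planar graph on $v(G)$ vertices, so $e(G)-c\le 3v(G)-6$ --- this is exactly the content of Lemma~\ref{cross-lem}, rewritten via Euler's formula together with $f_F(\gx)=4c$ and $f_T(\gx)=e(G)-3c-v(G)+2$. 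Combining, $6c\le e(G)\le 3(v(G)-2)+c$, so $5c\le 3(v(G)-2)$, whence $cr(G)\le c\le\tfrac35(v(G)-2)$.

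The step I expect to be delicate is the structural claim: one must make precise that both endpoints of a missing chord really lie on the boundary of the face hosting the relevant corner, so that the chord can be inserted without a crossing and without creating a multi-edge, and that this insertion keeps the drawing NIC-planar (it does, since it alters no crossing). I would also record the routine small-case checks. It is worth emphasizing where near-independence enters essentially: under mere $1$-planarity two crossings may share two cluster vertices (Proposition~1), a chord through them would be double-counted, and the inequality $e(G)\ge 6\,cr(G)$ would break down --- so the bound $\tfrac35(v(G)-2)$ genuinely separates NIC-planar graphs from $1$-planar graphs, for which only $cr(G)\le v(G)-2$ is available.
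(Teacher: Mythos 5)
Your proposal is correct, and it shares the paper's central structural idea---making sure that for every crossing the four ``chord'' edges on its cluster set are actually present---but it carries out the counting in a genuinely different way. The paper adds (or redraws) the chords so that they lie close to each crossing, triangulates the associated plane graph, observes that each crossing then forces four \emph{true} triangular faces and that no true triangle can be charged by more than three crossings, so $4\,cr\le 3f_T$, and feeds this into Lemma~\ref{cross-lem}. You instead pass to a maximal NIC-planar supergraph $\widehat G$, where maximality together with your corner-of-the-crossing argument yields the chords as abstract edges (their position in the drawing is irrelevant), and then count edges: the NIC condition $|S(c_i)\cap S(c_j)|\le 1$ makes the six edges attached to each crossing pairwise distinct over all crossings, so $e(\widehat G)\ge 6c$, while deleting one edge per crossing gives $e(\widehat G)\le 3v-6+c$; combining gives $c\le\frac{3}{5}(v-2)$ and hence $cr(G)\le cr(\widehat G)\le\frac{3}{5}(v-2)$. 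Your route is a bit more elementary and robust: it avoids the triangulation step and does not depend on the equality $f_F=4\,cr$ invoked in the proof of Lemma~\ref{cross-lem} (your aside that $e-c\le 3v-6$ is ``exactly'' that lemma holds only modulo this equality, but your own derivation by edge deletion is independent of it and is all you use). As a bonus, the inequality $e(\widehat G)\ge 6c$ simultaneously re-derives Theorem~\ref{edge-thm}. Only routine housekeeping remains: $e-c\le 3v-6$ needs $v\ge 3$, but any crossing already forces $v\ge 4$, and with no crossings the statement is immediate, exactly the small-case checks you flagged.
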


\begin{proof}
Let $G$ be an NIC-planar drawing of the graph and let $c$ be a crossing at which $v_1v_2$ crosses $v_3v_4$. By the definition of NIC-planarity, $v_1v_4,v_2v_4,v_1v_3$ and $v_2v_3$ (if exist) are not crossed. Therefore, if one edge  mentioned above, say $v_1v_4$ for example, do not exist in $G$, then we add it to $G$ so that the closed area formed by $v_1,v_4$ and $c$ contains no interior vertices, and if such an edge exists, then we redraw it if necessary so that the closed area formed by $v_1,v_4$ and $c$ has no interior vertices. For every crossing in $G$ we do the above operation, then we obtain a new NIC-planar graph $G_1$. Triangulate the associated plane graph $G_1^*$ of $G_1$ and denote the resulted graph by $G_2^*$. It is easy to see that $G_2^*$ is an associated plane graph of an NIC-planar graph $G_2$.
For any crossing $c$ generalized by $v_1v_2$ crossing $v_3v_4$ in $G_2$, there are four true faces of degree 3 that are incident with one of the edges among $v_1v_4,v_2v_4,v_1v_3$ and $v_2v_3$ in $G_2^*$. Thus, $4cr(G_2)\leq 3f_T(G_2^*)$, where $f_T(G_2^*)$ is the number of true faces in $G_2^*$. By Lemma \ref{cross-lem}, $cr(G_2)\leq v(G_2)-2-\frac{1}{2}f_T(G_2^*)\leq v(G)-2-\frac{2}{3}cr(G_2)$, which implies
$cr(G)\leq \frac{3}{5}(v(G)-2)$, since $cr(G)=cr(G_2)$.
\end{proof}

\begin{thm}\label{edge-thm}
$e(G)\leq \frac{18}{5}(v(G)-2)$ for any NIC-planar graph $G$.
\end{thm}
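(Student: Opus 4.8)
The plan is to feed the crossing bound of Theorem~\ref{cross-thm} into the classical planar edge bound applied to the associated plane graph $\gx$. We may assume $v(G)\geq 3$, the cases $v(G)\leq 2$ being trivial. Since every NIC-planar graph is $1$-planar, each edge of $G$ is crossed at most once, so in the minimum-crossing drawing of $G$ fixed at the start of this section, replacing each of the $cr(G)$ crossings by a $4$-valent false vertex splits exactly two edges of $G$ into two pieces each; hence
$$v(\gx)=v(G)+cr(G),\qquad e(\gx)=e(G)+2\,cr(G).$$

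The main observation is that $\gx$ is a simple plane graph on $v(\gx)\geq 3$ vertices. It is plane by construction. It is simple because a loop or a pair of parallel edges in $\gx$ would force either a loop or parallel edges already present in the simple graph $G$, or two adjacent edges of $G$ crossing one another, or a single edge of $G$ crossed twice --- none of which occurs in our drawing. (This is exactly the hypothesis behind the inequality $2e(\gx)\geq 3f(\gx)$ invoked in the proof of Lemma~\ref{cross-lem}.) Therefore every face of $\gx$ has length at least $3$, and the usual consequence of Euler's formula gives $e(\gx)\leq 3v(\gx)-6$.

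Substituting the displayed identities turns this into $e(G)+2\,cr(G)\leq 3\bigl(v(G)+cr(G)\bigr)-6$, that is, $e(G)\leq 3(v(G)-2)+cr(G)$. Finally Theorem~\ref{cross-thm} bounds $cr(G)\leq\frac{3}{5}(v(G)-2)$, and hence
$$e(G)\leq 3(v(G)-2)+\frac{3}{5}(v(G)-2)=\frac{18}{5}(v(G)-2),$$
as desired.

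I do not expect a genuine obstacle: once Theorem~\ref{cross-thm} is available, the bound follows by a single substitution into $e(\gx)\leq 3v(\gx)-6$. The only steps requiring (routine) care are the disposal of the trivial graphs with $v(G)\leq 2$ and the verification that $\gx$ is simple --- so that $\gx$ has no face of length $2$ --- which is a standard feature of optimal $1$-planar drawings of simple graphs. Showing that the constant $\frac{18}{5}$ is best possible is a separate issue; in keeping with the paper's subsequent classification, one would expect it to be attained by a suitable complete multipartite graph.
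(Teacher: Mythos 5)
Your proof is correct and follows essentially the same route as the paper: both establish the intermediate bound $e(G)\leq 3(v(G)-2)+cr(G)$ and then invoke Theorem~\ref{cross-thm}. The only cosmetic difference is that you derive this bound via Euler's formula on the associated plane graph $\gx$ (which forces you to check simplicity), whereas the paper gets it more directly by deleting one edge from each crossing pair to obtain a simple plane subgraph.
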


\begin{proof}
If we remove one edge from every pair of mutually crossed edges, we obtain a plane graph. Therefore, $e(G)\leq 3v(G)-6+cr(G)$ and by Theorem \ref{cross-thm} the result follows.
\end{proof}

By Theorem \ref{edge-thm}, it is easy to conclude that every NIC-planar graph $G$ contains a vertex of degree at most 7. However, this is not a new result since it already holds for 1-planar graphs. Surprisedly, by discharging method we can prove the following better result.

\begin{thm}\label{6-degenerate}
Every NIC-planar graph contains a vertex of degree at most $6$.
\end{thm}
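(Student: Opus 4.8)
The plan is to argue by contradiction via a discharging argument carried out on the associated plane graph. Suppose $G$ is an NIC-planar graph with $\delta(G)\ge 7$, fixed together with an NIC-planar drawing; we may clearly assume $G$ is connected. First I would augment this drawing exactly as in the proof of Theorem~\ref{cross-thm}: for every crossing $c$ at which $v_1v_2$ crosses $v_3v_4$, add (if missing) and suitably redraw the four ``protective'' edges $v_1v_3,\,v_1v_4,\,v_2v_3,\,v_2v_4$ so that together with $c$ they bound four triangular faces around $c$. An edge of such a quadrilateral cannot be crossed in an NIC-planar drawing, and the inequality $|S(c)\cap S(c')|\le 1$ guarantees that no protective edge is demanded by two distinct crossings, so the augmentation can be performed crossing by crossing without creating multi-edges or new crossings. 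It produces an NIC-planar graph $G_1\supseteq G$ with $\delta(G_1)\ge 7$ whose associated plane graph $\gx:=G_1^{\times}$ has all false faces of degree $3$.

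Next I would put charges on $\gx$, giving each vertex $x$ the charge $\mu(x)=d(x)-6$ and each face $\phi$ the charge $\mu(\phi)=2d(\phi)-6$; by Euler's formula $\sum_{x\in V(\gx)}\mu(x)+\sum_{\phi\in F(\gx)}\mu(\phi)=-12$. Every face has degree at least $3$, so $\mu(\phi)\ge 0$; every true vertex has degree at least $7$, so $\mu\ge 1$; the only vertices with negative charge are the false ones, which are $4$-valent with $\mu=-2$, are pairwise non-adjacent, and each have exactly four true neighbours. The only discharging rule I would use is: \emph{each true vertex sends $\tfrac12$ to every adjacent false vertex}. Then each false vertex ends with $-2+4\cdot\tfrac12=0$ and the faces are untouched, so only the true vertices remain to be checked.

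The key point, and the place where the NIC condition really enters, is the bound: if a true vertex $v$ lies in $k$ crossings, then $d_{\gx}(v)\ge 3k$, so $v$ has at most $\lfloor d(v)/3\rfloor$ false neighbours. Indeed, for each crossing $c_i$ incident to $v$, regard $v$ as an endpoint of one of the two edges meeting at $c_i$; then in $\gx$ the vertex $v$ is adjacent to $c_i$ and, through protective edges, to the two endpoints of the other edge at $c_i$. This lists $3k$ neighbours of $v$, and they are pairwise distinct since distinct crossings yield distinct false vertices and, by $|S(c_i)\cap S(c_j)|\le 1$, the sets $S(c_i)\setminus\{v\}$ are pairwise disjoint. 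Hence a true vertex of degree $d$ finishes with charge at least $d-6-\tfrac12\lfloor d/3\rfloor$, which equals $0$ for $d=7$ and is positive for $d\ge 8$. Thus after discharging every charge is nonnegative although the total is $-12$, a contradiction, so $\delta(G)\le 6$. I expect the main obstacle to be the bookkeeping in the first step --- verifying that the protective-edge augmentation keeps the graph simple and NIC-planar and that no protective edge is claimed by two crossings --- rather than the discharging itself, which is the short computation above.
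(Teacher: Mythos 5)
Your proposal is correct, and it proves the theorem by a genuinely different discharging scheme than the paper's. The paper works on the raw associated plane graph $G^{\times}$ with charges $d(x)-4$ on both vertices and faces and discharges from vertices of degree at least $7$ to their incident triangular faces ($\tfrac12$ to false triangles, $\tfrac13$ to true ones); the NIC condition enters there through the observation that a degree-$7$ vertex incident only with triangles can be incident with at most four false faces. You instead preprocess the drawing with the protective-edge augmentation borrowed from the proof of Theorem~\ref{cross-thm}, use charges $d-6$ on vertices and $2d-6$ on faces, discharge directly from true vertices to adjacent false vertices, and localize the use of NIC-planarity in the single counting fact that the sets $S(c_i)\setminus\{v\}$ are pairwise disjoint, so a true vertex of degree $d$ meets at most $\lfloor d/3\rfloor$ crossings; the computation $d-6-\tfrac12\lfloor d/3\rfloor\ge 0$ for $d\ge 7$ then closes the argument against the total of $-12$. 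Your justification of the augmentation is sound (an existing edge between two vertices of $S(c)$ cannot be crossed, since otherwise two crossings would share two cluster vertices, and no pair of vertices is a protective pair for two distinct crossings, again by $|S(c)\cap S(c')|\le 1$); note, however, that you do not actually need the stronger conclusion that all false faces become triangles, nor any redrawing of existing protective edges: faces never participate in your discharging, so it suffices that each missing protective edge can be added uncrossed (inside a face of the current associated plane graph incident to the corresponding corner at the crossing), which simplifies the bookkeeping you worried about. The trade-off is that the paper's proof never modifies the graph or drawing, while yours pays for the augmentation with a cleaner, face-free discharging step whose key lemma ($d_{G^{\times}}(v)\ge 3k$) isolates exactly where near-independence of crossings is used.
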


\begin{proof}
Let $G$ be a counterexample to it and let $\gx$ be the associated plane graph of an NIC-planar drawing of $G$. Note that $\delta(G)\geq 7$. Assign to each vertex $v\in V(\gx)$ an initial charge $c(v)=d(v)-4$ and each face $f\in F(\gx)$ an initial charge $c(f)=d(f)-4$. By Euler's formula, we have $\sum_{v\in V(\gx)}(d(v)-4)+\sum_{f\in F(\gx)}(d(f)-4)=-8$, thus $\sum_{x\in V(\gx)\cup F(\gx)}c(x)=-8$. Define the discharging rule as follows:
\begin{description}
  \item[Rule] Every vertex of degree at least 7 transfers $\frac{1}{2}$ or $\frac{1}{3}$ to each of its incident false or true faces of degree 3, respectively.
\end{description}
Let $c'$ be the final charge function after discharging. It is easy to see that $c'(f)\geq 0$ for every $f\in F(\gx)$, since every false or true face is incident with two or three vertices of degree at least 7, respectively. Let $v$ be a vertex of $\gx$. If $d(v)\geq 8$, then $c'(f)\geq d(v)-4-\frac{1}{2}d(v)\geq 0$. If $d(v)=7$ and $v$ is incident with at most 6 faces of degree 3, then $c'(v)\geq 7-4-6\times \frac{1}{2}=0$. If $d(v)=7$ and $v$ is incident with only faces of degree 3, then by the NIC-planarity of $G$, $v$ is incident with at most four false faces, thus $c'(v)\geq 7-4-4\times\frac{1}{2}-3\times\frac{1}{3}=0$. If $d(v)=4$, then $c'(v)=0$. Therefore, $\sum_{x\in V(\gx)\cup F(\gx)}c(x)=\sum_{x\in V(\gx)\cup F(\gx)}c'(x)\geq 0$, a contradiction.
\end{proof}

Theorem \ref{6-degenerate} generalized Zhang and Liu's result in \cite{ZL-CEJM}: every IC-planar graph contains a vertex of degree at most $6$. Since there is a 6-regular IC-planar graph (see Fig.\,1 of \cite{ZL-CEJM}) and every IC-planar graph is NIC-planar, the bound 6 in Theorem \ref{6-degenerate} is best possible.

Theorems \ref{cross-thm}, \ref{edge-thm} and \ref{6-degenerate} tells us that any NIC-planar graph has small crossing number, small number of edges and small minimum degree. In the following, we are to present the full characterizations of NIC-planar and non-NIC-planar complete $k$-partite graphs, which are helpful to recognize some non-NIC-planar graphs.

\begin{thm}\label{1-part}
The complete graph $K_n$ is NIC-planar if and only if $n\leq 5$.
\end{thm}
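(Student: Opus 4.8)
The plan is to establish the two directions separately. For the easy direction, I would exhibit an explicit NIC-planar drawing of $K_5$; since every subgraph of an NIC-planar graph is NIC-planar, this also handles $K_n$ for $n\leq 5$. A convenient drawing of $K_5$ has one crossing: place four vertices as a convex quadrilateral, put the fifth vertex inside, and draw the single pair of crossing edges as the two diagonals of the quadrilateral. With only one crossing, the condition $|S(c_1)\cap S(c_2)|\leq 1$ is vacuous, so this drawing is trivially NIC-planar (indeed IC-planar). Hence $K_n$ is NIC-planar for all $n\leq 5$.

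For the hard direction, I would show $K_6$ is not NIC-planar, which then rules out every $K_n$ with $n\geq 6$ by the subgraph-closure property. The natural tool is the edge bound of Theorem \ref{edge-thm}: any NIC-planar graph $G$ satisfies $e(G)\leq \tfrac{18}{5}(v(G)-2)$. For $K_6$ we have $v=6$ and $e=15$, while $\tfrac{18}{5}(6-2)=\tfrac{72}{5}=14.4<15$. This contradiction shows $K_6$ is not NIC-planar, and the proof is complete.

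I do not expect a serious obstacle here. The only point requiring a little care is making sure the counting direction is airtight: one should note explicitly that $K_n$ for $n\geq 6$ contains $K_6$ as a subgraph and that NIC-planarity is inherited by subgraphs (as observed in the paragraph preceding Conjecture \ref{conj}), so non-NIC-planarity of $K_6$ propagates upward. One could alternatively push the edge-count argument directly: $e(K_n)=\binom{n}{2}$ grows quadratically while $\tfrac{18}{5}(n-2)$ grows linearly, so the inequality fails for all $n\geq 6$; but the subgraph argument is cleaner and reuses the single numerical check at $n=6$. If one wanted to avoid invoking Theorem \ref{edge-thm}, the crossing-number bound of Theorem \ref{cross-thm} gives $cr(K_6)\leq \tfrac{3}{5}(6-2)=2.4$, so $cr(K_6)\leq 2$, which contradicts the known value $cr(K_6)=3$; either route works.
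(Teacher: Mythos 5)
Your proposal is correct and follows essentially the same route as the paper: an explicit one-crossing (hence trivially NIC-planar) drawing of $K_5$ for the sufficiency, and the edge bound of Theorem \ref{edge-thm} for the necessity, the only cosmetic difference being that the paper applies the inequality $e(K_n)>\frac{18}{5}(v(K_n)-2)$ directly for every $n\geq 6$ while you verify it for $K_6$ and invoke subgraph closure.
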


\begin{proof}
It is easy to see that $K_5$ is NIC-planar since $cr(K_5)=1$. For $K_n$ with $n\geq 6$, $e(K_n)=\frac{1}{2}n(n+1)>\frac{18}{5}(n-2)=\frac{18}{5}(v(K_n)-2)$, so by Theorem \ref{edge-thm} it is not NIC-planar.
\end{proof}

\begin{lem}{\rm \cite{Kleitman}}\label{cross-Kmn}
If $m\leq 6$, then $cr(K_{m,n})=\lfloor\frac{m}{2}\rfloor\lfloor\frac{m-1}{2}\rfloor\lfloor\frac{n}{2}\rfloor\lfloor\frac{n-1}{2}\rfloor$.
\end{lem}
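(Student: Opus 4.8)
This is Kleitman's theorem \cite{Kleitman}, which confirms the Zarankiewicz conjecture $cr(K_{m,n})=Z(m,n):=\lfloor\frac m2\rfloor\lfloor\frac{m-1}2\rfloor\lfloor\frac n2\rfloor\lfloor\frac{n-1}2\rfloor$ in the range $\min\{m,n\}\le 6$, so what I can offer is a sketch of how the proof goes. The upper bound $cr(K_{m,n})\le Z(m,n)$ is the easy half and holds for all $m,n$: in Zarankiewicz's drawing one puts $\lceil\frac m2\rceil$ vertices of the first class on the positive $x$-axis and the rest on the negative $x$-axis, $\lceil\frac n2\rceil$ vertices of the second class on the positive $y$-axis and the rest on the negative $y$-axis, and joins every pair by a straight segment; two segments cross only when their endpoints lie on the same half-axis on each of the two axes, and straightforward bookkeeping shows the total number of crossings is exactly $Z(m,n)$. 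So the real work is the lower bound $cr(K_{m,n})\ge Z(m,n)$, which I would prove by induction on $n$, assuming $m\le n$ without loss of generality.

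The basic engine of the induction is a deletion-averaging count. Fix a good drawing $D$ of $K_{m,n}$; for each $t$-subset $S$ of the $n$-class the induced sub-drawing is a drawing of $K_{m,t}$ and hence has at least $Z(m,t)$ crossings by induction. In a good drawing the two edges at a crossing have distinct endpoints in the $n$-class, so a given crossing of $D$ survives in exactly $\binom{n-2}{t-2}$ of the $\binom nt$ sub-drawings; summing over all $S$ gives $cr(D)\ge\frac{\binom nt}{\binom{n-2}{t-2}}\,Z(m,t)$. Taking $t=n-1$ and simplifying the floor functions, this bound equals $Z(m,n)$ when $n$ is even, while for $n$ odd it falls short of $Z(m,n)$ by at most $1$ (when $m\le 4$) or by at most $2$ (when $m=5$) after rounding up to an integer; the few base cases with $n\le m$ reduce to smaller complete bipartite graphs and are checked directly. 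For $m=6$ one instead deletes a single vertex of the $6$-class, producing six drawings of $K_{5,n}$; a crossing uses two of the six vertices, hence survives four of the six deletions, so $4\,cr(D)\ge 6\,cr(K_{5,n})$, i.e. $cr(K_{6,n})\ge\tfrac32 Z(5,n)=Z(6,n)$. Thus $m=6$ reduces to $m=5$ at no cost, and everything comes down to $m=5$ with $n$ odd.

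To close the residual gap for odd $n$ one needs a congruence constraint on the crossing number of a good drawing. For $m\le 4$ the slack of $1$ is removed by the parity statement that a good drawing of $K_{m,n}$ has an even number of crossings whenever $Z(m,n)$ is even (classically one can instead use an Euler-formula region count). For $m=5$ and odd $n$ the slack can be $2$ while $Z(5,n)$ is a multiple of $4$, so one needs Kleitman's sharper result that the crossing number of any good drawing of $K_{5,n}$ is congruent to $Z(5,n)$ modulo $4$; combined with the near-miss from the count and a direct treatment of $K_{5,3},\ldots,K_{5,6}$, this forces $cr(K_{5,n})=Z(5,n)$. I expect this parity/congruence theorem to be the main obstacle: it asserts that the residue of the crossing number is an invariant of the graph rather than of the particular good drawing, and establishing it requires tracking how crossings change under local redrawings and analysing which rotation systems $K_{5,n}$ can realise. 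It is precisely this step that breaks down for $m\ge 7$, which is why the method — and hence the lemma — stops at $\min\{m,n\}\le 6$.
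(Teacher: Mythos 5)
The paper never proves this lemma: it is imported as a quoted result from Kleitman \cite{Kleitman}, so deferring to that citation, as you do, is exactly what the author does and is all that the paper requires. Your sketch of Kleitman's argument has the right skeleton: Zarankiewicz's drawing gives $cr(K_{m,n})\le Z(m,n)$; the deletion--averaging count is exact when passing from odd $n-1$ to even $n$; and the inequality $4\,cr(K_{6,n})\ge 6\,cr(K_{5,n})$ settles $m=6$ from $m=5$ (the same even-side deletion settles $m=4$ from $m=3$, which is in fact how those cases are usually handled rather than by a parity statement).

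However, the congruence claims you rely on for the remaining odd cases are not correct as stated, and they sit precisely at the step you yourself identify as the crux. Kleitman's drawing-invariance result is a parity (mod $2$) theorem for good drawings of $K_{m,n}$ with \emph{both} parts odd; there is no mod $4$ invariance for good drawings of $K_{5,n}$ (a good but non-optimal drawing can have $Z(5,n)+2$ crossings, so the residue mod $4$ is not an invariant of the graph), and likewise ``an even number of crossings whenever $Z(m,n)$ is even'' fails when one part is even. Moreover, mod $2$ parity alone cannot close the gap for $K_{5,n}$ with $n$ odd: the rounded counting bound falls short of $Z(5,n)$ by exactly $2$, which already has the correct parity (e.g.\ it gives $14$ versus $Z(5,5)=16$). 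The argument that actually closes this gap in \cite{Kleitman} is a finer analysis of the rotations of the five-edge stars at the vertices of the $n$-class and lower bounds on the number of crossings between pairs of such stars, combined with the mod $2$ parity. Since the lemma is used here only as a cited result this does not affect the paper, but as a self-contained proof your sketch is missing, and misattributes, that essential step.
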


In the following, we use $Z(m,n)$ denotes the right member of the equality in Lemma \ref{cross-Kmn}.

\begin{thm}\label{2-part}
The complete bipartite graph $K_{m,n}$ with $m\geq n$ is NIC-planar if and only if $n\leq 2$, or $n=3$ and $m\leq 4$.
\end{thm}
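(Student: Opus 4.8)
The plan is to split into the "if" (sufficiency) and "only if" (necessity) directions, with necessity handled case by case using the edge bound of Theorem \ref{edge-thm} and the crossing-number formula of Lemma \ref{cross-Kmn}. For the "if" direction I would simply exhibit NIC-planar drawings: when $n\le 2$ the graph $K_{m,2}$ is planar (hence vacuously NIC-planar), and for $K_{4,3}$ I would draw it explicitly and check that its crossings can be realized with the near-independence condition $|S(c_1)\cap S(c_2)|\le 1$. Since $cr(K_{4,3})=Z(4,3)=2$ by Lemma \ref{cross-Kmn}, I expect a drawing with exactly two crossings whose four-element cluster sets share at most one vertex; this is a small finite check and should go through once the drawing is fixed.

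For the "only if" direction, suppose $K_{m,n}$ is NIC-planar with $m\ge n$. The cheapest obstruction is the edge count: $e(K_{m,n})=mn$ and $v(K_{m,n})=m+n$, so Theorem \ref{edge-thm} forces $mn\le \frac{18}{5}(m+n-2)$. First I would extract from this inequality that $n$ is small — in particular it should immediately kill all cases with $n\ge 4$ and $m$ large, and combined with $m\ge n$ it should leave only finitely many pairs $(m,n)$ with $n\ge 3$ to examine, e.g. $K_{3,3}$, $K_{4,3}$, $K_{5,3}$, $K_{6,3}$, possibly $K_{4,4}$, etc. Then I would eliminate the survivors other than $K_{4,3}$ one at a time. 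For those with $n\ge 4$ or with $n=3, m\ge 5$, I expect the edge bound or a refinement of it (possibly invoking Theorem \ref{cross-thm} to bound $cr$ from above and comparing with the exact value $Z(m,n)$ from Lemma \ref{cross-Kmn}) to already give a contradiction.

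The main obstacle is the borderline graph $K_{5,3}$, and to a lesser extent $K_{4,4}$: here the edge bound alone may not suffice, so I would argue via crossing numbers. By Theorem \ref{cross-thm}, an NIC-planar $G$ satisfies $cr(G)\le \tfrac{3}{5}(v(G)-2)$; for $K_{5,3}$ this gives $cr\le \tfrac{18}{5}=3.6$, i.e. $cr\le 3$, while Lemma \ref{cross-Kmn} gives $cr(K_{5,3})=Z(5,3)=4$, a contradiction. The analogous computation should dispatch $K_{4,4}$ (where $cr=Z(4,4)=4$ but the bound gives $cr\le \tfrac{18}{5}$, so $cr\le 3$) and all remaining candidates. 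The only subtlety to be careful about is $K_{4,3}$ itself, which must survive both tests: $e=12\le \tfrac{18}{5}\cdot 5=18$ and $cr=2\le 3$, consistent with NIC-planarity, which is why the explicit drawing in the "if" direction is essential. So the proof reduces to: (i) the edge inequality of Theorem \ref{edge-thm} to bound the candidate set, (ii) the crossing inequality of Theorem \ref{cross-thm} versus Lemma \ref{cross-Kmn} to knock out $K_{5,3}$, $K_{4,4}$ and the rest, and (iii) one explicit good drawing of $K_{4,3}$.
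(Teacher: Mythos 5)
Your handling of the decisive graphs coincides with the paper's: $K_{5,3}$ and $K_{4,4}$ are excluded by comparing the NIC bound $cr(G)\le\frac{3}{5}(v(G)-2)$ of Theorem \ref{cross-thm} with $Z(5,3)=Z(4,4)=4$ from Lemma \ref{cross-Kmn}, and sufficiency is an explicit drawing of $K_{4,3}$ together with planarity for $n\le 2$. The problem is your reduction to those cases. You claim that the edge inequality of Theorem \ref{edge-thm} ``should leave only finitely many pairs $(m,n)$ with $n\ge 3$ to examine.'' That is false: for $n=3$ one has $e(K_{m,3})=3m$ and $\frac{18}{5}(v-2)=\frac{18}{5}(m+1)$, and $3m\le\frac{18}{5}(m+1)$ holds for \emph{every} $m$, so no $K_{m,3}$ is ever eliminated by the edge count; likewise all $K_{m,4}$ with $m\le 18$, $K_{m,5}$ with $m\le 7$, and $K_{6,6}$ survive it. So step (i) of your plan does not produce a finite list, and as written the necessity argument has a hole for the infinite family $K_{m,3}$, $m\ge 5$ (and for the many surviving $n\ge 4$ cases).

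The paper closes this gap with an observation you never invoke in the necessity direction: NIC-planarity is closed under taking subgraphs, so it suffices to show that the two graphs $K_{5,3}$ and $K_{4,4}$ are not NIC-planar, because every $K_{m,n}$ with $m\ge n\ge 3$ outside the allowed list contains one of them. Your route can still be completed without that reduction --- for $n\le 6$ Kleitman's formula applies (the smaller side is at most $6$) and $Z(m,n)$ grows quadratically in $m$ while $\frac{3}{5}(m+n-2)$ is linear, and for $n\ge 7$ the edge bound does kick in --- but then the crossing-number comparison must be carried out for an infinite family rather than the ``finitely many survivors'' you anticipate, and the edge bound is doing essentially no work. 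Either add the subgraph-monotonicity reduction (which collapses everything to $K_{5,3}$ and $K_{4,4}$), or replace the false finiteness claim by the monotone comparison of $Z(m,n)$ against the linear crossing bound for all $n\ge 3$.
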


\begin{proof}
Since $K_{4,3}$ has an NIC-planar drawing (see Fig.\,\ref{K34}) and $K_{m,n}$ is planar if $n\leq 2$, the sufficiency holds. To prove the necessity, we just need to show that $K_{5,3}$ and $K_{4,4}$ are not NIC-planar, since any graph containing as a subgraph a non-NIC-planar graph is non-NIC-planar. If they are NIC-planar graphs, then by Theorem \ref{cross-thm}, $cr(K_{5,3})\leq 3$ and $cr(K_{4,4})\leq 3$. However, $cr(K_{5,3})=cr(K_{4,4})=4$ by Lemma \ref{cross-Kmn}, a contradiction.
\end{proof}

\begin{figure}
  % Requires \usepackage{graphicx}
  \begin{center}
  \includegraphics[width=4cm,height=3.6cm]{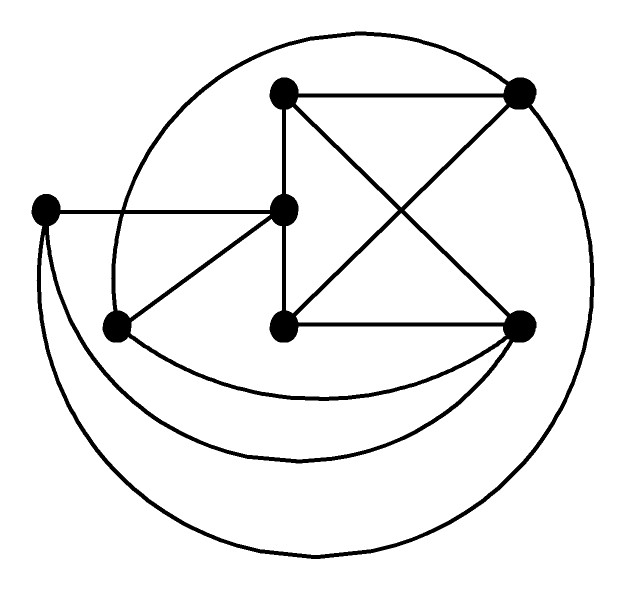}
\\\end{center}
  \caption{An NIC-planar drawing of $K_{4,3}$}\label{K34}
\end{figure}

%\begin{lem}{\rm \cite{HZ}}\label{cross-K4n1}
%$cr(K_{n,4,1})=n(n-1)$.
%\end{lem}

\begin{lem}{\rm \cite{Asano}}\label{cross-K13n-K23n}
$cr(K_{1,3,n})=Z(4,n)+\lfloor\frac{n}{2}\rfloor$ and $cr(K_{2,3,n})=Z(5,n)+n$.
\end{lem}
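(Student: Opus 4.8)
This is the theorem of Asano \cite{Asano}; here is how one would reprove it. In each case one matches an explicit drawing against a counting lower bound bootstrapped from Kleitman's formula for $cr(K_{m,n})$ with $m\le 6$ (Lemma \ref{cross-Kmn}). The starting observation is that $K_{1,3,n}$ contains $K_{4,n}$ as a spanning subgraph (regard $A\cup B$ and $C$ as the two colour classes) and, likewise, $K_{2,3,n}$ contains $K_{5,n}$; so Lemma \ref{cross-Kmn} already gives $cr(K_{1,3,n})\ge Z(4,n)$ and $cr(K_{2,3,n})\ge Z(5,n)$, and the entire issue is the linear correction terms $\lfloor n/2\rfloor$ and $n$, which record the crossings forced on the extra edges that live inside the small part $A\cup B$ (a $K_{1,3}$, respectively a $K_{2,3}$).

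For the upper bounds I would start from an optimal Zarankiewicz drawing of the bipartite subgraph --- the branch vertices collinear on a horizontal axis in two runs on either side of a centre point, the $n$ vertices of $C$ split into groups of sizes $\lceil n/2\rceil$ and $\lfloor n/2\rfloor$ above and below, all edges straight --- and insert the edges of the $K_{1,3}$ (respectively $K_{2,3}$) in a thin strip along the axis. An edge between two axis-consecutive branch vertices is free, while an edge that must ``jump over'' a branch vertex $x$ (or over the centre) can be routed to cross only the bundle of edges leaving $x$ toward one of the two $C$-groups, at cost $\lceil n/2\rceil$ or $\lfloor n/2\rfloor$. Placing the parts at the axis positions so that the $K_{1,3}$ forces a single jump, directed toward the smaller $C$-group, yields $Z(4,n)+\lfloor n/2\rfloor$; a suitable placement makes the $K_{2,3}$ force two jumps, one toward each group, yielding $Z(5,n)+\lceil n/2\rceil+\lfloor n/2\rfloor=Z(5,n)+n$. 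This routing is routine but must be done carefully, since a careless one costs a multiple of $n$.

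For the lower bounds I would induct on $n$, small $n$ being checked by hand. Suppose $D$ is a drawing of $K_{1,3,n}$ with fewer than $Z(4,n)+\lfloor n/2\rfloor$ crossings, and split its crossings into \emph{pure} (both edges in the spanning $K_{4,n}$), \emph{mixed} (one in $K_{4,n}$, one in the $K_{1,3}$) and \emph{internal} (both in the $K_{1,3}$), with counts $S,M,I$. Then $I=0$ (the three edges of a $K_{1,3}$ pairwise share a vertex), $S\ge Z(4,n)$ by Lemma \ref{cross-Kmn}, and $\sum_{c\in C}r(c)=2S+M$, where $r(c)$ is the number of crossings of $D$ on an edge incident with $c$. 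For each $c$ the drawing $D-c$ is a drawing of $K_{1,3,n-1}$, so by the inductive hypothesis $r(c)=cr(D)-cr(D-c)\le cr(D)-cr(K_{1,3,n-1})<\lfloor n/2\rfloor+\lfloor(n-1)/2\rfloor=n-1$; summing, $2Z(4,n)\le 2S+M=\sum_c r(c)\le n(n-2)$. Since $Z(4,n)=2\lfloor n/2\rfloor\lfloor(n-1)/2\rfloor$, this is already a contradiction for $n$ odd, and for $n$ even it forces $M=0$ and $S=Z(4,n)$, hence $cr(D)=Z(4,n)$; so it only remains to prove that the $K_{1,3}$ cannot be added crossing-free to an optimal drawing of $K_{4,n}$. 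The case of $K_{2,3,n}$ is parallel, with $K_{5,n}$, $Z(5,n)$ and $Z(5,n)-Z(5,n-1)=4\lfloor(n-1)/2\rfloor$ in place of the $K_{4,n}$ data; there a $K_{2,3}$ has non-adjacent edge pairs and so can carry up to six internal crossings, the counting is correspondingly weaker, and one is left to show directly that an (almost) optimally drawn $K_{5,n}$ cannot cheaply absorb the edges of the $K_{2,3}$.

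I expect this last, structural step to be the real obstacle. The ``delete a vertex of $C$ and recurse'' count, fed by Kleitman's theorem, captures the quadratic main term and even settles some cases outright, but it cannot on its own see the linear correction, which the averaging washes out; one genuinely has to analyse the optimal (or near-optimal) drawings of $K_{4,n}$ and $K_{5,n}$ and show their branch vertices never jointly bound enough of the picture to accommodate the edges inside $A\cup B$. Turning that analysis into a proof --- i.e.\ pinning down the exact value rather than the approximate one --- is where the content of Asano's argument lies; the drawings and the arithmetic are routine.
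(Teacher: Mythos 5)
The paper gives no proof of this lemma at all: it is quoted from Asano \cite{Asano} and used as a black box, so your sketch has to be judged as a standalone proof, and as such it is incomplete. The parts you do carry out are sound: the Zarankiewicz-style drawings give the upper bounds $Z(4,n)+\lfloor n/2\rfloor$ and $Z(5,n)+n$, and your deletion count for $K_{1,3,n}$ is correct --- with $r(c)\le cr(D)-cr(K_{1,3,n-1})\le n-2$ one gets $2S+M\le n(n-2)$, which together with $S\ge Z(4,n)$ from Lemma \ref{cross-Kmn} kills odd $n$ and, for even $n$, forces $M=I=0$ and $S=Z(4,n)$. But at that point the proof is reduced to the assertion that no crossing-minimal drawing of $K_{4,n}$ admits a crossing-free insertion of the three star edges inside the $4$-part, and you do not prove this; you explicitly flag it as ``the real obstacle.'' For $K_{2,3,n}$ the situation is worse: the $K_{2,3}$ has independent edge pairs, so $I$ need not vanish, the per-vertex bound is weaker, and you concede that the counting leaves a linear gap that only a structural analysis of (near-)optimal drawings of $K_{5,n}$ could close --- and no such analysis is offered, not even in outline beyond naming it.

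So the gap is concrete and central: the exact values in this lemma are not obtainable from Kleitman's theorem plus averaging, precisely because (as you observe) the averaging washes out the linear correction terms $\lfloor n/2\rfloor$ and $n$; the entire content of Asano's theorem is the missing classification-type argument about how the branch vertices of an optimal bipartite subdrawing are separated by edge bundles of size roughly $n/2$, so that the extra $K_{1,3}$ (resp.\ $K_{2,3}$) edges cannot be absorbed cheaply. Until that step is actually carried out --- for both graphs, and also feeding back into the induction whose base cases you only wave at --- what you have is a correct plan with the hardest step left as an acknowledged placeholder. In the context of this paper the honest fix is the one the author uses: cite \cite{Asano} rather than reprove it.
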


%\begin{lem}{\rm \cite{Ho}}\label{cross-K1mn}
%$cr(K_{1,m,n})\geq cr(K_{m+1,n+1})-\lfloor\frac{n}{m}\lfloor\frac{m}{2}\rfloor\lfloor\frac{m+1}{2}\rfloor\rfloor$.
%\end{lem}

\begin{lem}\label{cross-8-vertex}
If $G$ is NIC-planar and $v(G)\leq 8$, then $cr(G)\leq 2$.
\end{lem}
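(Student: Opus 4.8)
The plan is to argue directly from the definition of NIC-planarity; note that Theorem~\ref{cross-thm} alone only yields $cr(G)\le\frac{3}{5}(8-2)=3.6$, hence $cr(G)\le 3$, when $v(G)=8$, which is one crossing short of what we want. Fix any NIC-planar drawing $D$ of $G$; since $cr(G)$ is at most the number of crossings of $D$, it suffices to show that $D$ has at most two crossings. Suppose instead that $D$ has three crossings $c_1,c_2,c_3$. By the standing convention that every crossing arises from exactly four distinct vertices, $|S(c_i)|=4$ for each $i$, and by the definition of NIC-planarity $|S(c_i)\cap S(c_j)|\le 1$ whenever $i\ne j$.

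The core of the argument is then inclusion--exclusion applied to $S(c_1)\cup S(c_2)\cup S(c_3)$, split into two cases. If some vertex $w$ lies in all three cluster sets, then $w$ lies in each of the three pairwise intersections, so each of those has size exactly $1$ (being at least $1$ and at most $1$), whence $|S(c_1)\cup S(c_2)\cup S(c_3)| = 12-3+1 = 10$. If the triple intersection is empty, then $|S(c_1)\cup S(c_2)\cup S(c_3)| = 12-\sum_{i<j}|S(c_i)\cap S(c_j)| \ge 12-3 = 9$. In either case $v(G)\ge|S(c_1)\cup S(c_2)\cup S(c_3)|\ge 9$, contradicting $v(G)\le 8$. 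Hence $D$ has at most two crossings and $cr(G)\le 2$.

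I do not expect a genuine obstacle here: the argument is a short counting estimate once it is set up correctly. The only points requiring care are the inclusion--exclusion bookkeeping---in particular observing that a nonempty triple intersection forces all three pairwise intersections to have size exactly $1$---and the explicit appeal to the two conventions that the drawing realizes NIC-planarity and that each crossing arises from four distinct real vertices, which is precisely what guarantees that every $|S(c_i)|$ equals $4$ and every $|S(c_i)\cap S(c_j)|$ is at most $1$. The same computation in fact shows that any NIC-planar graph with three or more crossings must have at least nine vertices, though only the case $v(G)\le 8$ is needed here.
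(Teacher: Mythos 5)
Your proof is correct and is essentially the same argument as the paper's: both show that three crossings with pairwise cluster-set intersections of size at most one would force at least nine distinct vertices, contradicting $v(G)\leq 8$. The paper carries out this count by an explicit case analysis on how $S(c_2)$ and $S(c_3)$ meet $S(c_1)$, while you package it as an inclusion--exclusion bound, which is merely a stylistic difference.
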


\begin{proof}
Without loss of generality, assume that $V(G)=\{v_1,\ldots,v_8\}$ and that there are three crossings $c_1,c_2,c_3$ in an NIC-planar drawing of $G$. Suppose that  $S(c_1)=\{v_1,v_2,v_3,v_4\}$ and $S(c_2)\supset\{v_5,v_6,v_7\}$. If $v_8\in S(c_2)$, then $|S(c_3)\cap S(c_i)|\geq 2$ for some $i\in \{1,2\}$, which contradicts the NIC-planarity of $G$. If $v_8\not\in S(c_2)$, then assume, without loss of generality, that $v_4\in S(c_2)$, which still implies that $|S(c_3)\cap S(c_i)|\geq 2$ for some $i\in \{1,2\}$, a contradiction.
\end{proof}

\begin{thm}\label{3-part}
The complete $3$-partite graph $K_{a_1,a_2,a_3}$ with $a_1\geq a_2\geq a_3$ is NIC-planar if and only if $a_2=1$, or $a_1\leq 4,a_2=2$ and $a_3=1$, or $a_1=a_2=a_3=2$.
\end{thm}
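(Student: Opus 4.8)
The plan is to treat sufficiency and necessity separately, mirroring the structure of the proofs of Theorems \ref{2-part} and \ref{1-part}. For the sufficiency direction I would list the three families --- $K_{a_1,1,1}$ for all $a_1$, $K_{4,2,1}$ (and its subgraphs $K_{a_1,2,1}$ with $a_1\le 4$), and $K_{2,2,2}$ --- and exhibit an explicit NIC-planar drawing of each maximal member. Note $K_{a_1,1,1}$ is obtained from $K_{2,a_1}$ by adding one edge between the two vertices in the part of size $2$; since $K_{2,a_1}$ is planar, a single crossing suffices for that extra edge, so $K_{a_1,1,1}$ is NIC-planar for every $a_1$. For $K_{4,2,1}$ and $K_{2,2,2}$ (both on at most $8$ vertices, with $cr(K_{2,2,2})=2$ and $cr(K_{4,2,1})$ small) I would draw an explicit figure, as was done for $K_{4,3}$ in Fig.\,\ref{K34}, and check that no two crossings share two vertices.

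For the necessity direction, the idea is to identify the minimal complete $3$-partite graphs not on the list and show each is non-NIC-planar; every complete $3$-partite graph outside the list contains one of these as a subgraph, so non-NIC-planarity propagates. The minimal ``forbidden'' graphs are $K_{5,2,1}$, $K_{2,2,1}$'s overshoot... more precisely the candidates are $K_{5,2,1}$, $K_{3,2,1}$? no --- the complement of the list: a graph with $a_2\ge 2$ that is not $K_{a_1,2,1}$ with $a_1\le 4$ and not $K_{2,2,2}$ must contain one of $K_{5,2,1}$, $K_{2,2,1}$-with-a-third-part-vertex i.e. $K_{3,3,1}$ or $K_{2,2,2}$... let me instead say: it suffices to show $K_{5,2,1}$, $K_{3,3,1}$, and $K_{3,2,2}$ are non-NIC-planar. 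For $K_{5,2,1}$ and $K_{3,3,1}$, which contain $K_{1,3,n}$-type subgraphs, I would use Lemma \ref{cross-K13n-K23n} together with Theorem \ref{cross-thm}: compute $cr$ from the formula and compare with $\tfrac{3}{5}(v-2)$, deriving a contradiction exactly as in the proof of Theorem \ref{2-part}. For instance $cr(K_{1,3,5}) = Z(4,5)+2 = 4\cdot 2+2 = 10$ while $\tfrac35(v-2)=\tfrac35\cdot 7$, an immediate contradiction.

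The main obstacle will be the borderline cases where the crossing-number lower bound from Theorem \ref{cross-thm} is \emph{not} by itself strong enough --- in particular distinguishing $K_{4,2,1}$ (in the list) from $K_{5,2,1}$ (not in the list), and confirming $K_{3,2,2}$ and $K_{3,3,1}$ fail. Here $v$ is small (at most $8$), so Lemma \ref{cross-8-vertex} applies: any NIC-planar graph on at most $8$ vertices has $cr\le 2$. Thus I would compute (or cite) $cr(K_{3,2,2})$, $cr(K_{3,3,1})$ and $cr(K_{5,2,1})$ and check each exceeds $2$ --- e.g. $cr(K_{3,3,1})\ge cr(K_{3,3})+$ extra $\ge 1+\cdots$, and $cr(K_{5,2,1}) \ge cr(K_{5,2}) = Z(5,2)=4 > 2$, so $K_{5,2,1}$ is out; similarly $K_{3,3,1} \supseteq K_{3,3}$ plus... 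I would pin down that $cr(K_{3,3,1})\ge 3$ and $cr(K_{3,2,2})\ge 3$ by a short ad hoc argument or a known table. Combining: every complete $3$-partite graph not in the three listed families contains one of $K_{5,2,1}$, $K_{3,3,1}$, $K_{3,2,2}$ as a subgraph, each of which has crossing number at least $3$, contradicting Lemma \ref{cross-8-vertex} when $v\le 8$ and Theorem \ref{cross-thm} when $v\ge 9$; this completes the necessity and hence the characterization.
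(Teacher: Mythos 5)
Your overall decomposition matches the paper: the same three families for sufficiency, and the same three minimal obstructions $K_{5,2,1}$, $K_{3,3,1}$, $K_{3,2,2}$ for necessity, with $K_{3,3,1}$ handled exactly as in the paper (Asano's formula gives $cr(K_{1,3,3})=Z(4,3)+1=3$, contradicting Lemma~\ref{cross-8-vertex}). But there is a genuine gap at $K_{3,2,2}$: your plan is to ``pin down that $cr(K_{3,2,2})\geq 3$,'' and that is false. By the very Lemma~\ref{cross-K13n-K23n} you cite, $cr(K_{2,3,2})=Z(5,2)+2=0+2=2$, so $K_{3,2,2}$ is a $7$-vertex graph with crossing number exactly $2$, and Lemma~\ref{cross-8-vertex} (which only forbids three or more crossings) gives no contradiction whatsoever. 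This is precisely the borderline case where all the counting tools of the paper are too weak, and it is where the paper has to work hardest: it assumes an NIC-planar drawing with $c$ crossings, uses Euler's formula on the associated plane graph to force $c=2$ and a face vector of twelve triangles plus one quadrilateral, and then classifies the six possible types of crossings according to which parts the four endpoints come from. The key structural point is that the cluster set of any crossing must contain two vertices from the same part, which are non-adjacent, so the ``kite'' around that crossing cannot be completed by triangles; each crossing type then forces either a face of degree at least $5$ or two distinct faces of degree $4$ (distinct by NIC-planarity), contradicting the computed face vector. Your proposal contains no substitute for this argument, so the necessity direction is incomplete at its hardest point.

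There are also smaller slips worth fixing: $K_{2,2,2}$ is the octahedron and is planar, so $cr(K_{2,2,2})=0$, not $2$; and $Z(5,2)=0$, so the claim $cr(K_{5,2})=Z(5,2)=4$ is wrong ($K_{5,2}$ is planar). For $K_{5,2,1}$ the clean route is the paper's: it contains $K_{5,3}$ (merge the parts of sizes $2$ and $1$ against the part of size $5$), which is non-NIC-planar by Theorem~\ref{2-part}. These are repairable, but the $K_{3,2,2}$ step requires a new idea along the lines sketched above.
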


\begin{figure}
  % Requires \usepackage{graphicx}
  \begin{center}
  \includegraphics[width=6cm,height=3.2cm]{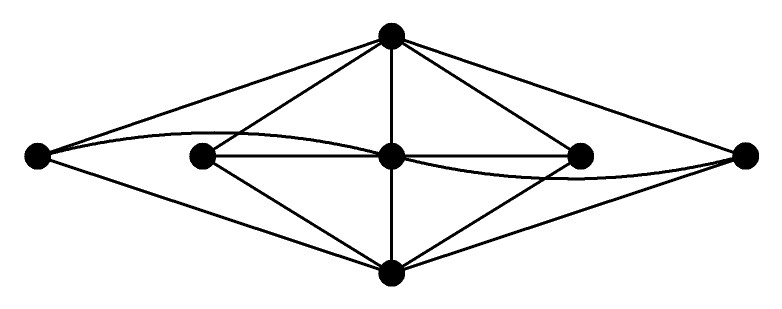}
\\\end{center}
  \caption{An NIC-planar drawing of $K_{4,2,1}$}\label{K421}
\end{figure}

\begin{proof}
Since $K_{4,2,1}$ has an NIC-planar drawing (see Fig.\,\ref{K421}) and $K_{2,2,2}$, $K_{a_1,1,1}$ are planar, the sufficiency holds. To prove the necessity, we just need to show that $K_{5,2,1}$, $K_{3,3,1}$ and $K_{3,2,2}$ are not NIC-planar graphs. Since $K_{5,2,1}$ contains as a subgraph $K_{5,3}$ which is non-NIC-planar by Theorem \ref{2-part}, it is non-NIC-planar.
If $K_{3,3,1}$ is NIC-planar, then Lemma \ref{cross-8-vertex} implies that $cr(K_{3,3,1})\leq 2$, which contradicts the fact that $cr(K_{3,3,1})=3$ by Lemma \ref{cross-K13n-K23n}. In the following, we claim that $K_{3,2,2}$ has no NIC-planar drawings.

If $K_{3,2,2}$ has an NIC-planar drawing $G$ with $c$ crossings, then the associated plane graph $\gx$ of $G$ has $16+2c$ edges, $11+c$ faces. $3+c$ vertices of degree 4 and four vertices of degree 5. On the other hand, we have $\sum_{v\in V(\gx)}(d(v)-4)+\sum_{f\in F(\gx)}(d(f)-4)=-8$, which implies that $\sum_{f\in F(\gx)}(d(f)-4)=-12$. Therefore, $\gx$ has at least 12 faces of degree 3. By the proof of Lemma \ref{cross-8-vertex}, we conclude that the NIC-drawing $G$ of the 7-vertex graph $K_{3,2,2}$ has at most two crossings, so $c\leq 2$. By Lemma \ref{cross-K13n-K23n}, we have $cr(3,2,2)=2$, thus $c=2$. Hence $\gx$ has 13 faces, twelve of which are of degree 3 and one of them is of degree 4. Note that there may be six types of crossings in $G$, see Fig.\,\ref{6}, where the vertices marked by $\alpha_i$ are taken from the $i$-th part of $K_{3,2,2}$.
  \begin{figure}
  \begin{center}
  \includegraphics[width=14cm,height=2cm]{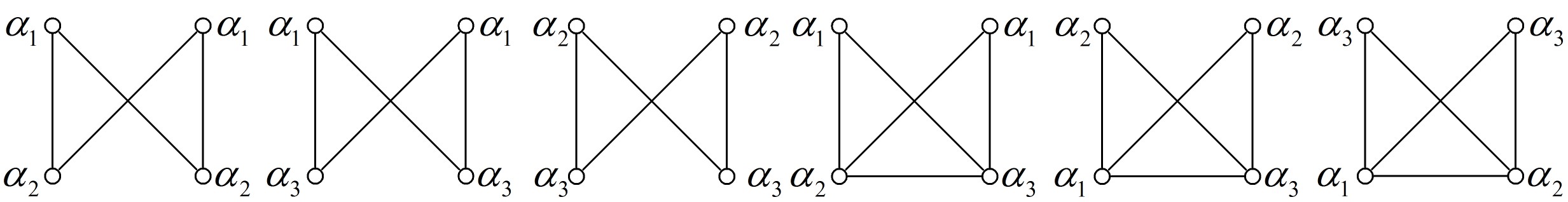}\label{6}
\\\end{center}
\caption{Six types of crossings in any possible NIC-drawing of $K_{3,2,2}$}
\end{figure}

It is easy to see that if there is one crossing point $u$ in $G$ as in one of the first three graphs in Fig.\,\ref{6}, then $u$ is a false vertex in $\gx$ that is incident with at least one face of degree at least 6 or at least two faces of degree ar least 4, a contradiction to the fact that $F(\gx)$ consists of 12 faces of degree 3 and one face of degree 4. If there is one crossing point $u$ in $G$ as in one of the last three graphs in Fig.\,\ref{6}, then $u$ is a false vertex in $\gx$ that is incident with exactly one face of degree at least 4, say $f_u$, and moreover, if $u$ and $v$ are two different crossing points of $G$ in this type and $d(f_u)=d(f_v)=4$, then $f_u\neq f_v$ by the NIC-planarity of $G$. This implies that there is a face of degree at least 5 or two faces of degree 4 in $\gx$, a contradiction.
\end{proof}

%\begin{lem}{\rm \cite{Ho2011}}\label{cross-K113n}
%$cr(K_{1,1,3,n})=Z(5,n)+\lfloor\frac{3n}{2}\rfloor$.
%\end{lem}

\begin{thm}\label{4-part}
The complete $4$-partite graph $K_{a_1,a_2,a_3,a_4}$ with $a_1\geq a_2\geq a_3\geq a_4$ is NIC-planar if and only if $a_1\leq 4$ and $a_2=1$, or $a_1=a_2=2$ and $a_3=1$.
\end{thm}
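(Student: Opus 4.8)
The plan is to follow the pattern of the proofs of Theorems \ref{2-part} and \ref{3-part}: pin down the finitely many extremal NIC-planar members of the family by explicit drawings, and for the converse reduce every other complete $4$-partite graph to one of a small list of forbidden subgraphs already known to be non-NIC-planar. First observe that, under the hypothesis $a_1\ge a_2\ge a_3\ge a_4\ge 1$, the condition in the statement describes exactly the graphs $K_{a,1,1,1}$ with $a\le 4$ together with $K_{2,2,1,1}$.

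For the sufficiency I would exhibit explicit NIC-planar drawings of $K_{4,1,1,1}$ and of $K_{2,2,1,1}$ (two new figures, analogous to Figures \ref{K34} and \ref{K421}). Since $K_{a,1,1,1}$ is a subgraph of $K_{4,1,1,1}$ for every $a\le 4$, and any subgraph of an NIC-planar graph is NIC-planar, these two drawings settle all five graphs at once.

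For the necessity I would argue by a short case analysis on $a_2$ (and, when $a_2=2$, on $a_3$): if $a_2\ge 3$ then $a_1\ge 3$ and $K_{3,3,1,1}$ is a subgraph; if $a_2=2$ and $a_3=2$ then $K_{2,2,2,1}$ is a subgraph; if $a_2=2$, $a_3=1$ and the graph is not $K_{2,2,1,1}$ then $a_1\ge 3$ and $K_{3,2,1,1}$ is a subgraph; and if $a_2=1$ and the graph is not $K_{a,1,1,1}$ with $a\le 4$ then $a_1\ge 5$ and $K_{5,1,1,1}$ is a subgraph. It then remains to check that each of the four graphs $K_{5,1,1,1}$, $K_{3,3,1,1}$, $K_{2,2,2,1}$, $K_{3,2,1,1}$ is non-NIC-planar, and each of these reduces to an earlier result: $K_{5,1,1,1}$ contains $K_{5,3}$, non-NIC-planar by Theorem \ref{2-part}; $K_{3,3,1,1}$ contains $K_{3,3,1}$, and both $K_{2,2,2,1}$ and $K_{3,2,1,1}$ contain $K_{3,2,2}$, all non-NIC-planar by Theorem \ref{3-part}. (Alternatively, $K_{2,2,2,1}$ and $K_{3,3,1,1}$ can be ruled out directly: since $e(K_{2,2,2,1})=18>15=3\cdot 7-6$ and $e(K_{3,3,1,1})=22>18=3\cdot 8-6$, the bound $e(G)\le 3v(G)-6+cr(G)$ gives $cr\ge 3$ for both, contradicting Lemma \ref{cross-8-vertex}.)

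The step I expect to need the most care is verifying the two inclusions $K_{3,2,2}\subseteq K_{2,2,2,1}$ and $K_{3,2,2}\subseteq K_{3,2,1,1}$, since neither is obtained by simply deleting a part; each requires regrouping the seven vertices into new classes of sizes $3,2,2$. Passing to complements makes this transparent: $K_{3,2,2}\subseteq H$ on seven vertices is equivalent to $\overline H$ being a subgraph of $\overline{K_{3,2,2}}=K_3\cup 2K_2$, and indeed both $\overline{K_{2,2,2,1}}=3K_2\cup K_1$ (place the three disjoint edges one in each of $K_3,K_2,K_2$, and the isolated vertex at the remaining vertex of $K_3$) and $\overline{K_{3,2,1,1}}=K_3\cup K_2\cup 2K_1$ embed into $K_3\cup 2K_2$. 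The other point requiring genuine construction is producing the two NIC-planar drawings, where the constraint that every crossing share at most one end-vertex with any other is restrictive; once these and the inclusions are in hand, confirming that the case analysis is exhaustive is routine.
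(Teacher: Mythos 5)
Your proposal is correct and follows the same overall strategy as the paper: explicit NIC-planar drawings of $K_{4,1,1,1}$ and $K_{2,2,1,1}$ for sufficiency, and for necessity a reduction to finitely many forbidden complete multipartite subgraphs disposed of by Theorems \ref{2-part} and \ref{3-part}. The one substantive difference is the list of minimal violating graphs: the paper reduces to $K_{5,1,1,1}$, $K_{3,2,1,1}$ and $K_{2,2,2,2}$, whereas your case analysis yields $K_{5,1,1,1}$, $K_{3,3,1,1}$, $K_{3,2,1,1}$ and $K_{2,2,2,1}$. Your list is in fact the more careful one: $K_{2,2,2,1}$ violates the condition of the theorem yet contains none of the paper's three graphs (it has only $7$ vertices, so $K_{5,1,1,1}$ and $K_{2,2,2,2}$ are excluded, and $K_{3,2,1,1}\not\subseteq K_{2,2,2,1}$ because this containment is equivalent to $\overline{K_{2,2,2,1}}=3K_2\cup K_1\subseteq \overline{K_{3,2,1,1}}=K_3\cup K_2\cup 2K_1$, which fails since the latter has maximum matching $2$); so the paper's written reduction leaves that case untreated, while your observation $K_{3,2,2}\subseteq K_{2,2,2,1}$, correctly verified via complements, closes it using Theorem \ref{3-part}. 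Your handling of $a_2\geq 3$ through $K_{3,3,1,1}\supseteq K_{3,3,1}$ is valid (and could be shortened by quoting $K_{3,3,1}$ directly, or by noting that $a_2\geq 3$ already forces a $K_{3,2,1,1}$ subgraph), and your parenthetical alternative via $e(G)\leq 3v(G)-6+cr(G)$ combined with Lemma \ref{cross-8-vertex} is also sound. In short, the approach is essentially the paper's, but your necessity argument is actually exhaustive where the paper's stated list is not.
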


\begin{figure}
  % Requires \usepackage{graphicx}
  \begin{center}
  \includegraphics[width=4.5cm,height=3.2cm]{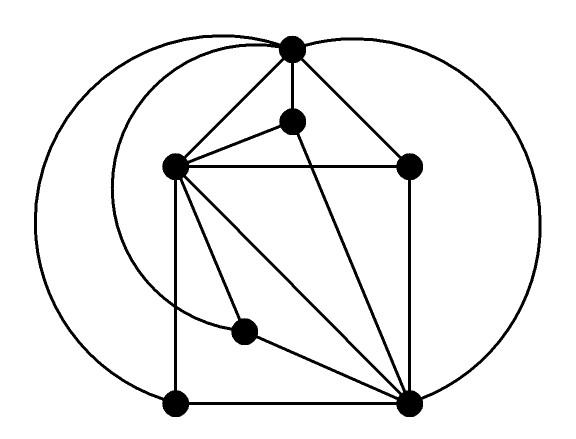}
\\\end{center}
  \caption{An NIC-planar drawing of $K_{4,1,1,1}$}\label{K4111}
\end{figure}

\begin{proof}
Since $K_{4,1,1,1}$ has an NIC-planar drawing (see Fig.\,\ref{K4111}) and $K_{2,2,1,1}$ has a drawing with only one crossing (thus has an NIC-planar drawing), the sufficiency holds. To prove the necessity, we just need to show that $K_{5,1,1,1}$, $K_{3,2,1,1}$ and $K_{2,2,2,2}$ are not NIC-planar graphs. Since $K_{5,1,1,1}$ and $K_{2,2,2,2}$ contains as a subgraph $K_{5,3}$ and $K_{4,4}$ which are non-NIC-planar by Theorem \ref{2-part}, respectively, they are non NIC-planar.
Since $K_{3,2,2}$ is non-NIC-planar by Theorem \ref{3-part} and it is a subgraph of $K_{3,2,1,1}$, $K_{3,2,1,1}$ is non-NIC-planar.
\end{proof}

\begin{lem}{\rm \cite{Ho2009}}\label{cross-K1111n}
$cr(K_{1,1,1,1,n})=Z(4,n)+n$.
\end{lem}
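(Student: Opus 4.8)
We indicate how Lemma~\ref{cross-K1111n} can be obtained. The plan is to prove the two inequalities $cr(K_{1,1,1,1,n})\le Z(4,n)+n$ and $cr(K_{1,1,1,1,n})\ge Z(4,n)+n$ separately, the first by an explicit drawing and the second by a reduction to $K_{4,n}$ together with an induction on $n$.

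For the upper bound, start from the Zarankiewicz drawing of $K_{4,n}$: place the four vertices of the complete part at $(0,1),(0,2),(0,-1),(0,-2)$, place $\lceil n/2\rceil$ vertices of the independent part on the positive $x$-axis and $\lfloor n/2\rfloor$ on the negative $x$-axis, and draw every bipartite edge as a straight segment, which produces exactly $Z(4,n)$ crossings. Now add the six edges of the $K_4$. Four of them cost nothing: the two segments along the positive and the negative $y$-axis, the edge $(0,1)(0,-1)$ drawn through the origin (no bipartite edge meets the open $y$-axis strictly between $y=-1$ and $y=1$), and the edge $(0,2)(0,-2)$ routed through the unbounded face around the whole picture. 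The two remaining edges $(0,1)(0,-2)$ and $(0,2)(0,-1)$ each have to slip past one interior $y$-axis vertex; routing the first just to the left of $(0,-1)$ and the second just to the right of $(0,1)$ lets them cross only the $\lfloor n/2\rfloor$ bipartite edges leaving $(0,-1)$ to the left, respectively the $\lceil n/2\rceil$ bipartite edges leaving $(0,1)$ to the right, and these two curves can be kept disjoint from each other and from the four free edges. This adds $\lfloor n/2\rfloor+\lceil n/2\rceil=n$ crossings, so the total is $Z(4,n)+n$ (for $n=1$ one recovers $K_5$ drawn with a single crossing).

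For the lower bound, fix an optimal drawing $D$ of $G:=K_{1,1,1,1,n}$, write $c_1,\dots,c_4$ for the complete part and $v_1,\dots,v_n$ for the independent part, and classify the crossings of $D$ into $a$ crossings of two $K_4$-edges, $b$ crossings of a $K_4$-edge with a bipartite edge, and $c$ crossings of two bipartite edges. Deleting the six $K_4$-edges leaves a drawing of $K_{4,n}$, so $c\ge cr(K_{4,n})=Z(4,n)$ by Lemma~\ref{cross-Kmn}; hence it suffices to show $a+b\ge n$. If $a=0$ the drawn $K_4$ is plane, its four faces are triangles, each $v_i$ lies in one of them, and since that triangle has only three of the $c_j$ on its boundary, the edge joining $v_i$ to the remaining $c_j$ must cross a $K_4$-edge; thus $b\ge n$ and we are done. (Combined with $cr(K_{1,1,1,1,n})\ge cr(K_{1,3,n})=Z(4,n)+\lfloor n/2\rfloor$ from Lemma~\ref{cross-K13n-K23n}, this already yields a crude bound without any case analysis.)

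The delicate case is $a\ge 1$, where the central $K_4$ crosses itself and some $v_i$ may lie in a face carrying all of $c_1,\dots,c_4$ on its boundary, so that $b$ can be far below $n$. Here I would induct on $n$: since $Z(4,n)-Z(4,n-1)=2\lfloor(n-1)/2\rfloor$, it is enough to exhibit one $v_i$ carrying at least $2\lfloor(n-1)/2\rfloor+1$ crossings on its incident edges, then delete it and invoke $cr(K_{1,1,1,1,n-1})\ge Z(4,n-1)+(n-1)$. To produce such a vertex one analyses the few possible face patterns of the drawn $K_4$ (a single self-crossing gives four triangular faces, each with two $c_j$ on the boundary, together with an outer face containing all four), and then uses the fact that any two of the $v_i$ lying in one face with $c_1,\dots,c_4$ in a fixed cyclic order on its boundary force at least two mutual crossings of their stars --- a constrained Zarankiewicz-type estimate --- which pushes $c$ strictly above $Z(4,n)$ exactly in the regime where $b$ is small. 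I expect this trade-off to be the main obstacle: the surplus needed is only a single crossing and the case distinction over the face types of the drawn $K_4$ is intricate, so a few small instances (say $n\le 4$, in particular the value $cr(K_{1,1,1,1,3})=5$) will probably have to be verified directly to anchor the induction.
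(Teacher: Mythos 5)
This lemma is not proved in the paper at all: it is quoted verbatim from Ho \cite{Ho2009}, a research paper devoted to such crossing numbers, so the only "proof" the paper offers is the citation. Your attempt to reconstruct it from scratch is sound in its easy parts but has a genuine gap exactly where the real difficulty of Ho's theorem sits. The upper-bound drawing (Zarankiewicz drawing of $K_{4,n}$ plus the six $K_4$-edges routed along and around the $y$-axis, costing $\lfloor n/2\rfloor+\lceil n/2\rceil=n$ extra crossings) is fine, and so is the lower bound in the case $a=0$: a self-crossing-free $K_4$ has four triangular faces, each $v_i$ sees only three of the $c_j$ on its face boundary, hence $b\ge n$, and $c\ge cr(K_{4,n})=Z(4,n)$ after deleting the $K_4$-edges.

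But in the case $a\ge 1$ you do not have a proof, and you say so yourself: the claimed ``constrained Zarankiewicz-type estimate'' (two vertices $v_i,v_j$ in a face of the crossed $K_4$ with all four $c_k$ on its boundary force at least two crossings between their stars) is asserted, not proved, and even granted it you have not carried out the bookkeeping showing that the surplus in $c$ compensates the deficit in $b$, nor exhibited the vertex with at least $2\lfloor(n-1)/2\rfloor+1$ crossings needed for your induction step, nor verified the base cases (e.g.\ $cr(K_{1,1,1,1,3})=5$) you acknowledge are needed to anchor it. This trade-off between ``vertices in the big face avoid crossing the $K_4$'' and ``their stars must cross each other'' is precisely the content of Ho's argument, which rests on nontrivial counting together with known crossing numbers of related graphs; it cannot be waved through in a paragraph. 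So the proposal is an honest plan with a correct upper bound and a correct special case, but the lower bound, and hence the lemma, remains unproven; for the purposes of this paper the correct move is simply to cite \cite{Ho2009}.
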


\begin{lem}\label{cross-6-vertex}
If $G$ is NIC-planar and $v(G)\leq 6$, then $cr(G)\leq 1$.
\end{lem}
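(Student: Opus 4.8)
The plan is to reuse the counting idea from the proof of Lemma \ref{cross-8-vertex}, but in the six-vertex setting it collapses to a single inclusion--exclusion step. First I would reduce the statement about the crossing number to a statement about one drawing: since $G$ is NIC-planar it admits an NIC-planar drawing $D$, and because $cr(G)$ is the minimum number of crossings over \emph{all} drawings of $G$, it suffices to prove that $D$ itself has at most one crossing.

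Next I would argue by contradiction. Suppose $D$ contains two distinct crossings $c_1$ and $c_2$. By the blanket assumption on drawings made in the introduction, every crossing arises from two edges meeting in a single point with four distinct end-vertices, so each cluster set $S(c_1)$, $S(c_2)$ is a subset of $V(G)$ of size exactly $4$. Since $|V(G)|\le 6$, inclusion--exclusion yields
\[
|S(c_1)\cap S(c_2)| = |S(c_1)| + |S(c_2)| - |S(c_1)\cup S(c_2)| \ge 4 + 4 - 6 = 2,
\]
which contradicts the defining property of NIC-planarity that $|S(c_1)\cap S(c_2)|\le 1$. Hence $D$ has at most one crossing, and therefore $cr(G)\le 1$.

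There is essentially no hard step here; the only thing requiring a little care is the bookkeeping at the outset --- observing that bounding the crossings in a single NIC-planar drawing is enough, and recalling that the "good drawing" convention forces each cluster set to consist of four genuinely distinct vertices so that the size count $|S(c_i)|=4$ is exact. Unlike Theorem \ref{3-part}, no discharging, Euler's formula, or enumeration of crossing types is needed: the bound $v(G)\le 6$ is precisely what makes two NIC-independent crossings impossible. (Note that Theorem \ref{cross-thm} alone would only give $cr(G)\le\frac{3}{5}\cdot 4 < 3$, i.e. $cr(G)\le 2$, so the combinatorial argument above is genuinely needed to sharpen this to $1$.)
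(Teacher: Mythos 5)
Your proof is correct and is essentially the same argument as the paper's: the paper notes that two crossings would force $|S(c_1)\cup S(c_2)|\geq 7>v(G)$ under NIC-planarity, which is just your inclusion--exclusion step read in the other direction. No substantive difference.
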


\begin{proof}
If there are at least two crossings $c_1$ and $c_2$, then by the definition of NIC-planarity we have $|S(c_1)\cup S(c_2)|\geq 7$, contradicting the assumption $v(G)\leq 6$.
\end{proof}

\begin{thm}\label{5-part}
There is only one complete $5$-partite graph, that is, $K_{1,1,1,1,1}$.
\end{thm}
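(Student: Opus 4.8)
The plan is to read Theorem~\ref{5-part} as the statement that $K_{1,1,1,1,1}$ is the unique NIC-planar complete $5$-partite graph, and to prove it in the same style as Theorems~\ref{2-part}, \ref{3-part} and \ref{4-part}: one drawing for the positive part, and a reduction to a single small forbidden subgraph for the negative part. The positive part is immediate, since $K_{1,1,1,1,1}=K_5$ is NIC-planar by Theorem~\ref{1-part} (in fact $cr(K_5)=1$).

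For the negative part, I would first make a subgraph reduction. If $K_{a_1,a_2,a_3,a_4,a_5}$ is a complete $5$-partite graph different from $K_{1,1,1,1,1}$, then $a_1\geq 2$; taking two vertices from the first part and one vertex from each of the remaining four parts exhibits an induced copy of $K_{2,1,1,1,1}$. Since every subgraph of an NIC-planar graph is NIC-planar, it suffices to show that $K_{2,1,1,1,1}$ is not NIC-planar.

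To do this, note that $K_{2,1,1,1,1}=K_{1,1,1,1,2}$, so by Lemma~\ref{cross-K1111n} we have $cr(K_{2,1,1,1,1})=Z(4,2)+2$; and since $Z(4,2)=\lfloor 4/2\rfloor\lfloor 3/2\rfloor\lfloor 2/2\rfloor\lfloor 1/2\rfloor=0$, this gives $cr(K_{2,1,1,1,1})=2$. On the other hand $K_{2,1,1,1,1}$ has only $6$ vertices, so if it were NIC-planar then Lemma~\ref{cross-6-vertex} would force $cr(K_{2,1,1,1,1})\leq 1$, a contradiction. Hence $K_{2,1,1,1,1}$, and with it every complete $5$-partite graph other than $K_{1,1,1,1,1}$, fails to be NIC-planar, which completes the characterization.

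I do not expect any genuine obstacle here: once the $K_{2,1,1,1,1}$ subgraph is identified, the argument is a routine combination of Lemmas~\ref{cross-K1111n} and~\ref{cross-6-vertex}, exactly parallel to the earlier parts-characterizations. The only points needing care are the evaluation $Z(4,2)=0$ (so that $cr(K_{1,1,1,1,2})$ equals $2$ and not more) and the purely cosmetic observation that the theorem concerns NIC-planar complete $5$-partite graphs rather than being a literal enumeration of all complete $5$-partite graphs.
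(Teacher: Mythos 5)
Your proposal is correct and follows essentially the same route as the paper: show $K_5$ is NIC-planar, then rule out $K_{2,1,1,1,1}$ by combining Lemma~\ref{cross-K1111n} (giving $cr(K_{2,1,1,1,1})=2$) with Lemma~\ref{cross-6-vertex}. The only additions are the explicit subgraph reduction and the evaluation $Z(4,2)=0$, which the paper leaves implicit.
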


\begin{proof}
It is easy to see that $K_{1,1,1,1,1}=K_5$ has a drawing with only one crossing, thus it is NIC-planar. By Lemma \ref{cross-K1111n}, we have $cr(K_{2,1,1,1,1})=2$. However, if $K_{2,1,1,1,1}$ is NIC-planar, then by Lemma \ref{cross-6-vertex}, $cr(K_{2,1,1,1,1})\leq 1$, a contradiction.
\end{proof}

\begin{thm}\label{6-part}
There is no NIC-planar complete $t$-partite graphs with $t\geq 6$.
\end{thm}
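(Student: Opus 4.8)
The plan is to reduce the statement immediately to the complete graphs treated in Theorem~\ref{1-part}. The key observation is that every complete $t$-partite graph $K_{a_1,a_2,\dots,a_t}$ contains a copy of $K_t$ as a subgraph: choose one vertex from each of the $t$ parts; since any two of these vertices lie in distinct parts, they are adjacent, so the chosen $t$ vertices induce a $K_t$. When $t\geq 6$, this $K_t$ in turn contains $K_6$ as a subgraph.

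Next I would invoke Theorem~\ref{1-part}, according to which $K_n$ is NIC-planar if and only if $n\leq 5$; in particular $K_6$ is \emph{not} NIC-planar. Finally, as noted in the introduction, the class of NIC-planar graphs is closed under taking subgraphs: deleting vertices or edges from a good NIC-planar drawing only removes crossings and can never create two crossings whose cluster sets share two or more vertices. Consequently any graph that contains the non-NIC-planar graph $K_6$ as a subgraph is itself non-NIC-planar, and therefore no complete $t$-partite graph with $t\geq 6$ is NIC-planar.

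I expect no real obstacle in this argument; the only point deserving a line of justification is the hereditary property of NIC-planarity, which follows directly from the definition. As an alternative one could apply the edge bound of Theorem~\ref{edge-thm} to $K_6$, since $e(K_6)=15>\tfrac{18}{5}(6-2)=\tfrac{72}{5}$, but the subgraph reduction to Theorem~\ref{1-part} is cleaner and avoids any computation.
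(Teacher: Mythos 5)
Your proposal is correct and follows essentially the same route as the paper: pick one vertex per part to find $K_6$ as a subgraph, invoke Theorem~\ref{1-part} for its non-NIC-planarity, and use the fact that NIC-planarity is closed under taking subgraphs. The extra justification you give for the hereditary property is fine but not needed beyond what the paper already states.
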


\begin{proof}
Since any complete $t$-partite graph $G$ with $t\geq 6$ contains as a subgraph $K_6$ which is a non-NIC-planar graph by Theorem \ref{1-part}, $G$ is not NIC-planar.
\end{proof}

%\section{Conclusions}

We now collect our results to the following table that presents the full characterization of NIC-planar complete $k$-partite graphs.
Note that the graphs after which we mark a star are planar graphs, and after which we mark two stars are non-planar IC-planar graphs.

\begin{center}
\begin{tabular}{|c|c|}
  \hline
  % after \\: \hline or \cline{col1-col2} \cline{col3-col4} ...
  $k$ & NIC-planar complete $k$-partite graphs \\\hline\hline
  2 & $K_{1,n}$(*), $K_{2,n}$(*), $K_{3,3}$(**), $K_{3,4}$ \\
  3 & $K_{1,1,n}$(*), $K_{1,2,2}$(*), $K_{1,2,3}$(**), $K_{1,2,4}$ ,$K_{2,2,2}$(*) \\
  4 & $K_{1,1,1,1}$(*), $K_{1,1,1,2}$(*), $K_{1,1,1,3}$(**), $K_{1,1,1,4}$, $K_{1,1,2,2}$(**)  \\
  5 & $K_{1,1,1,1,1}$(**) \\
  \hline
\end{tabular}
\end{center}

Since IC-planar graphs is the subclass of NIC-planar graphs, we deduce the following full characterization of IC-planar complete $k$-partite graphs.

\begin{center}
\begin{tabular}{|c|c|}
  \hline
  % after \\: \hline or \cline{col1-col2} \cline{col3-col4} ...
  $k$ & IC-planar complete $k$-partite graphs \\\hline\hline
  2 & $K_{1,n}$(*), $K_{2,n}$(*), $K_{3,3}$ \\
  3 & $K_{1,1,n}$(*), $K_{1,2,2}$(*), $K_{1,2,3}$, $K_{2,2,2}$(*) \\
  4 & $K_{1,1,1,1}$(*), $K_{1,1,1,2}$(*), $K_{1,1,1,3}$, $K_{1,1,2,2}$  \\
  5 & $K_{1,1,1,1,1}$ \\
  \hline
\end{tabular}
\end{center}

%From the above tables we can see that there are only eight non-planar NIC-planar complete $k$-partite graphs and five non-planar IC-planar complete $k$-partite graphs.

\begin{cor}\label{non-NIC-mul}
Any non-NIC-planar complete multipartite graphs contains one of the following graph: $K_{3,5}$, $K_{2,2,3}$, $K_{1,1,1,1,2}$.
\end{cor}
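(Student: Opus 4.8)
The plan is to invoke the full characterization of NIC-planar complete $k$-partite graphs obtained in Theorems \ref{1-part}--\ref{6-part} (equivalently, the first table), and to argue that every complete multipartite graph lying outside that list contains at least one of $K_{3,5}$, $K_{2,2,3}$, $K_{1,1,1,1,2}$ as a subgraph. Since any supergraph of a non-NIC-planar graph is non-NIC-planar (every subgraph of an NIC-planar graph is NIC-planar), and each of $K_{3,5}=K_{5,3}$, $K_{2,2,3}=K_{3,2,2}$, $K_{1,1,1,1,2}=K_{2,1,1,1,1}$ was shown to be non-NIC-planar in Theorems \ref{2-part}, \ref{3-part}, \ref{5-part} respectively, this suffices: a complete multipartite graph is NIC-planar precisely when it contains none of these three graphs.

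First I would fix a non-NIC-planar complete $t$-partite graph $G=K_{a_1,\ldots,a_t}$ with $a_1\geq\cdots\geq a_t\geq 1$ and split into cases on $t$. For $t=2$: $G=K_{a_1,a_2}$ is non-NIC-planar iff (Theorem \ref{2-part}) $a_2\geq 4$, or $a_2=3$ and $a_1\geq 5$; in the first subcase $K_{4,4}\supseteq K_{1,1,2,2}$ already contains $K_{1,1,1,1,2}$ as a subgraph (deleting/splitting parts), and more directly one checks $K_{4,4}\supseteq K_{2,1,1,1,1}$; in the second subcase $G\supseteq K_{5,3}$. For $t=3$: by Theorem \ref{3-part}, non-NIC-planarity forces $G$ to contain one of the minimal non-NIC-planar $3$-partite graphs $K_{5,2,1}\supseteq K_{5,3}$, $K_{3,3,1}$, or $K_{3,2,2}$; here $K_{3,3,1}\supseteq K_{3,3}$... but $K_{3,3}$ is NIC-planar, so I must instead observe $K_{3,3,1}\supseteq K_{3,2,2}$ (take a $2$-set and a $1$-set from the two triples plus the singleton, actually $K_{3,3,1}$ on parts $\{A,B,c\}$ with $|A|=|B|=3$ contains $K_{2,2,3}$ by shrinking $A,B$ to size $2$ and using $c$ together with a leftover vertex... this needs care) — so the three containments to verify are $K_{5,2,1}\supseteq K_{3,5}$, $K_{3,3,1}\supseteq K_{2,2,3}$, $K_{3,2,2}\supseteq K_{2,2,3}$.

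For $t=4$: by Theorem \ref{4-part} the minimal obstructions are $K_{5,1,1,1}\supseteq K_{5,3}=K_{3,5}$, $K_{3,2,1,1}\supseteq K_{3,2,2}=K_{2,2,3}$ (merging the two singletons into the size-$2$ part enlarges an independent set, giving $K_{3,2,2}$ as a subgraph), and $K_{2,2,2,2}\supseteq K_{2,2,2,2}$, which contains $K_{1,1,1,1,2}=K_{2,1,1,1,1}$ by splitting one of its four parts of size $2$ into two singletons (this is a subgraph operation: the complete $5$-partite graph on parts of sizes $2,1,1,1,1$ is obtained from $K_{2,2,2,2}$ by deleting the edges inside three of the parts is \emph{not} right — rather $K_{2,2,2,2}$ already contains all edges of $K_{2,1,1,1,1}$ once we declare three vertices, one from each of three distinct parts, to be the three new singleton classes). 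For $t=5$: Theorem \ref{5-part} says the only NIC-planar one is $K_{1,1,1,1,1}$, so any non-NIC-planar $5$-partite $G$ contains $K_{2,1,1,1,1}=K_{1,1,1,1,2}$. For $t\geq 6$: by Theorem \ref{6-part} every such $G$ is non-NIC-planar and contains $K_6\supseteq K_{2,1,1,1,1}=K_{1,1,1,1,2}$.

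The main obstacle is purely bookkeeping: verifying each of the subgraph containments $K_{5,2,1}\supseteq K_{3,5}$, $K_{3,3,1}\supseteq K_{2,2,3}$, $K_{3,2,2}\supseteq K_{2,2,3}$, $K_{5,1,1,1}\supseteq K_{3,5}$, $K_{3,2,1,1}\supseteq K_{2,2,3}$, $K_{2,2,2,2}\supseteq K_{1,1,1,1,2}$, and $K_6\supseteq K_{1,1,1,1,2}$. Each follows from the elementary fact that $K_{b_1,\ldots,b_s}$ is a subgraph of $K_{a_1,\ldots,a_r}$ whenever the partition $(b_i)$ refines, up to deletion of vertices, the partition $(a_i)$ — equivalently, merging parts of a complete multipartite graph only removes edges, and deleting vertices only removes edges, so coarsening the partition of $G$ and then pruning to the right sizes always yields the target as a subgraph. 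I would state this refinement lemma once and apply it uniformly, which collapses the case analysis to a short paragraph.
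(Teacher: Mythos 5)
Your overall route (reduce to the characterization in Theorems \ref{2-part}--\ref{6-part} and then verify subgraph containments) is the same one the paper implicitly relies on, but several of the containments you assert are false, and two of them cannot be repaired. First, $K_{4,4}\supseteq K_{2,1,1,1,1}$ (and $K_{4,4}\supseteq K_{1,1,2,2}$) is impossible: $K_{4,4}$ is bipartite, hence triangle-free, while $K_{1,1,1,1,2}$ and $K_{2,2,3}$ both contain triangles; and $K_{4,4}$ is $4$-regular while $K_{3,5}$ has vertices of degree $5$, so $K_{4,4}$ contains \emph{none} of the three listed graphs even though it is non-NIC-planar by Theorem \ref{2-part}. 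Second, $K_{3,3,1}\supseteq K_{2,2,3}$ fails for a simple counting reason: both graphs have $7$ vertices, but $K_{3,3,1}$ has $15$ edges and $K_{2,2,3}$ has $16$; since $K_{3,3,1}$ also has clique number $3$ and only $7$ vertices, it too contains none of $K_{3,5}$, $K_{2,2,3}$, $K_{1,1,1,1,2}$, yet it is non-NIC-planar by Theorem \ref{3-part}. Third, $K_{2,2,2,2}\supseteq K_{1,1,1,1,2}$ is false because $\omega(K_{2,2,2,2})=4$ while $K_{1,1,1,1,2}\supseteq K_5$; this one is fixable, since $K_{2,2,2,2}$ does contain $K_{2,2,3}$ (take two of its $2$-parts as the two $2$-classes and the third $2$-part together with one vertex of the fourth as the $3$-class).

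The root of these errors is the direction of your ``refinement lemma'': splitting a part of a complete multipartite graph \emph{adds} edges and produces a supergraph, not a subgraph; only merging parts and deleting vertices yield subgraphs. Once this is corrected, the cases $K_{5,2,1}$, $K_{5,1,1,1}$, $K_{3,2,1,1}$, the $t\geq 5$ cases and the subcase $a_2=3$, $a_1\geq 5$ of $t=2$ all go through as you indicate, but the cases $K_{4,4}$ and $K_{3,3,1}$ remain genuinely uncovered: these two graphs are non-NIC-planar complete multipartite graphs containing none of the three graphs in the statement, so no argument along these lines (including the paper's implicit one) can close them without enlarging the list to include $K_{4,4}$ and $K_{3,3,1}$ (or weakening ``contains'' to something other than subgraph containment). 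You should flag this explicitly rather than assert the missing containments.
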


\begin{cor}\label{non-IC-mul}
Any non-IC-planar complete multipartite graph contains $K_{3,4}$ or $K_{1,1,1,1,2}$.
\end{cor}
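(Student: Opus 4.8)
The plan is to read this off from Corollary~\ref{non-NIC-mul} together with the small gap between the NIC-planar and IC-planar characterization tables. Let $G$ be a non-IC-planar complete multipartite graph; I split into two cases according to whether $G$ is NIC-planar.

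First, if $G$ is not NIC-planar, then by Corollary~\ref{non-NIC-mul} it contains one of $K_{3,5}$, $K_{2,2,3}$, $K_{1,1,1,1,2}$. Since $K_{3,5}\supseteq K_{3,4}$ (delete one vertex of the part of size $5$) and $K_{2,2,3}=K_{3,2,2}\supseteq K_{3,4}$ (put the part of size $3$ on one side, the two parts of size $2$ on the other, and keep only the edges between these two classes), $G$ contains $K_{3,4}$ or $K_{1,1,1,1,2}$, as required.

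Second, if $G$ is NIC-planar, then, being complete multipartite, it occurs in the NIC-planar table established by Theorems~\ref{1-part}--\ref{6-part}, while failing to be IC-planar means it does not occur in the IC-planar table. Comparing the two tables term by term, the only complete multipartite graphs lying in the first but not the second are $K_{3,4}$, $K_{1,2,4}$, and $K_{1,1,1,4}$. Each of these contains $K_{3,4}$ as a subgraph: for $K_{3,4}$ trivially; for $K_{1,2,4}$ put the part of size $1$ and the part of size $2$ on one side and the part of size $4$ on the other; for $K_{1,1,1,4}$ put the three parts of size $1$ on one side and the part of size $4$ on the other. Combining the two cases proves the corollary.

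The only thing that needs care is the ``subgraph, not induced subgraph'' point in the containments $K_{3,2,2},K_{1,2,4},K_{1,1,1,4}\supseteq K_{3,4}$: each holds because after discarding the edges internal to the chosen side of size $4$ what remains is exactly a complete bipartite $K_{3,4}$. Beyond that the proof is immediate once Corollary~\ref{non-NIC-mul} and the IC-planar characterization table are granted; a fully self-contained alternative would be a direct case analysis on the number of parts $k$ (for $k\geq 6$ use $K_6\supseteq K_{1,1,1,1,2}$; for $k=5$ note that $a_1\geq 2$ forces a $K_{1,1,1,1,2}$ subgraph; and for $k\leq 4$ check that forbidding a $K_{3,4}$ subgraph pins the part sizes down to exactly an entry of the IC-planar table).
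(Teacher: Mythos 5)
Your proposal is correct and follows essentially the paper's (implicit) argument: the corollary is read off from the characterization tables, the NIC-planar but non-IC-planar complete multipartite graphs being exactly $K_{3,4}$, $K_{1,2,4}$, $K_{1,1,1,4}$ (all containing $K_{3,4}$), while the non-NIC-planar ones contain $K_{3,5}$, $K_{2,2,3}$ or $K_{1,1,1,1,2}$, the first two of which contain $K_{3,4}$. The subgraph containments you verify are exactly the needed ones, so nothing further is required.
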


As we have mentioned in Sec.1, if we are to prove that a graph $G$ is non-NIC-planar, we can try to prove that $G$ contains a known non-NIC-planar graph $H$. It is easy to see that $H$ has a subgraph $H'$ such that $H'-e$ is NIC-planar. We call such a graph $H'$ \emph{minimum non-NIC-planar graph}. Therefore, every non-NIC-planar graph contains a minimum non-NIC-planar graph. How many minimum non-NIC-planar graphs are they? If the number is finite, then a graph is non-NIC-planar if and only if it contains one of those minimum non-NIC-planar graphs. This seems to be a good characterization of NIC-planar graphs. Unluckily, the answer of the above question is in the negative.

\begin{lem}{\rm \cite{KPS}}\label{cross-Kn}
$cr(K_n)\geq 0.8594Z(n)$, where $Z(n)=\frac{1}{4}\lfloor\frac{n}{2}\rfloor\lfloor\frac{n-1}{2}\rfloor\lfloor\frac{n-2}{2}\rfloor\lfloor\frac{n-3}{2}\rfloor$.
\end{lem}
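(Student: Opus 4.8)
I would treat this as the standard (and genuinely hard) lower bound for the crossing number of the complete graph, and I do not expect an elementary argument to reach the constant $0.8594$; my plan therefore has two ingredients: a counting/averaging reduction bounding $cr(K_n)$ from below in terms of crossing numbers of smaller complete or complete bipartite graphs, and a semidefinite-programming refinement of the counting step that supplies the extra accuracy. The first ingredient is routine. Given an optimal drawing $D$ of $K_n$, every crossing is determined by $4$ distinct vertices; averaging the number of crossings over all $(n-1)$-vertex induced sub-drawings (each a drawing of $K_{n-1}$, hence with at least $cr(K_{n-1})$ crossings), and noting that each crossing survives in exactly $n-4$ of them, gives $cr(K_n)\ge \frac{n}{n-4}\,cr(K_{n-1})$, which iterates down to any base $m$ to give $cr(K_n)\ge \frac{n(n-1)(n-2)(n-3)}{m(m-1)(m-2)(m-3)}\,cr(K_m)$. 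A stronger variant partitions $V(K_n)$ into two nearly equal classes and averages, over all such balanced partitions, the number of crossings lying inside the corresponding bipartite sub-drawing, each of which has at least $cr(K_{\lceil n/2\rceil,\lfloor n/2\rfloor})$ crossings; a short count of how many balanced partitions ``see'' a fixed crossing turns this into an explicit rational multiple of $cr(K_{k,k})$ (for $n=2k$ one gets $cr(K_{2k})\ge \frac{(2k-1)(2k-3)}{k(k-1)}\,cr(K_{k,k})$, with the odd case handled by an unbalanced split or by the $(n-1)$-vertex bound above). Thus everything reduces to a good lower bound on the bipartite crossing number.

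For the bipartite bound the same averaging over $6$-element classes, fed with Kleitman's exact values $cr(K_{m,n})=Z(m,n)$ for $m\le 6$ (Lemma~\ref{cross-Kmn}), already yields $\frac{cr(K_{m,n})}{Z(m,n)}\to \frac45$, hence $cr(K_n)\ge(\frac45-o(1))Z(n)$; but this purely combinatorial route only gets to about $\frac45$, so the remaining $0.059$ is exactly where the work is. I would stop using the crude estimate ``at least $cr(K_{6,n})$ crossings'' for each slice and instead track the full density profile of the local crossing patterns a bounded piece of any drawing can realise (equivalently, of the admissible rotation systems, or ``tiles''), impose the linear relations among these densities together with the known exact small values, add the positive-semidefiniteness inequalities obtained by squaring local random variables (the flag-algebra / Cauchy--Schwarz step), and maximise the resulting lower bound on the crossing density. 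After using the automorphism group to block-diagonalise the program (the regular $*$-representation reduction) it becomes small enough to solve, and the optimum certifies the constant $0.8594$ for $cr(K_{k,k})$; pushing this back through the bipartition reduction, and checking the finitely many small $n$ directly so that the estimate is valid for every $n$ rather than only asymptotically, gives $cr(K_n)\ge 0.8594\,Z(n)$.

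The main obstacle is precisely this semidefinite step: the combinatorial averaging is provably too weak on its own, so the constant has to be extracted from an SDP, and the real difficulty is twofold --- choosing the right finite family of local configurations together with the complete list of linear relations they satisfy, and performing the symmetry reduction that makes the program numerically tractable and then converting the floating-point optimum into a rigorous rational certificate. By comparison the graph-theoretic steps --- the two averaging reductions and the bookkeeping that transfers a bipartite bound to $K_n$ --- are elementary.
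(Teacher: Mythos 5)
The paper does not prove this statement at all: it is quoted verbatim from the cited reference of de Klerk, Pasechnik and Schrijver, so there is no internal argument to compare against. Judged as a standalone proof, your proposal has a genuine gap: everything that actually produces the number $0.8594$ is deferred to a semidefinite program that you describe but do not formulate, solve, or certify. The elementary ingredients you give (the counting bound $cr(K_n)\ge \frac{n}{n-4}cr(K_{n-1})$ and its telescoped form, the balanced-bipartition reduction, and the observation that Kleitman's exact values for $K_{6,n}$ only reach a constant of about $\frac45$) are correct and are indeed part of the standard pipeline, but they provably cannot reach $0.8594$, as you yourself note; so the proposal is a plan for a proof whose decisive step is missing rather than a proof.

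Two further points of accuracy relative to the actual source. First, the SDP in the cited work does not bound $cr(K_{k,k})$ asymptotically via a flag-algebra-style density program; it bounds $cr(K_{r,n})$ for a fixed small $r$ (concretely $r=9$) by a quadratic form over the cyclic orderings (rotations) of the $r$ vertices, and the regular $*$-representation is what collapses that $8!$-sized program to a tractable block-diagonal one; the bound is then lifted to all $K_{m,n}$ and to $K_n$ by exactly the counting arguments you describe. Second, the inequality as stated in the lemma is used in the paper for specific finite $n$ (namely $n=4M$), so one must either have the explicit finite-$n$ form of the SDP bound (which carries lower-order terms) or, as you suggest, verify small cases separately; this bookkeeping is glossed over but is needed for the lemma in the form quoted. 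In short: right road map, consistent with the cited proof's architecture, but the core analytic content --- the symmetry-reduced SDP and its rigorous rational certificate --- is named rather than supplied.
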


\begin{thm}\label{non-NIC}
There are infinite many minimum non-NIC-planar graphs.
\end{thm}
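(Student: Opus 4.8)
The plan is to prove something slightly stronger than the bare statement, namely that there exist minimal non-NIC-planar graphs of arbitrarily large order; here, in accordance with the notion discussed above, a \emph{minimal} (equivalently ``minimum'') non-NIC-planar graph is a non-NIC-planar graph all of whose proper subgraphs are NIC-planar, and every non-NIC-planar graph contains one. Since for each fixed number of vertices there are only finitely many graphs, a family of minimal non-NIC-planar graphs of unbounded order is automatically infinite. Thus it suffices to establish: for every positive integer $n_0$ there is a non-NIC-planar graph $G$ every subgraph of which on at most $n_0$ vertices is NIC-planar. Indeed, passing to a subgraph-minimal non-NIC-planar subgraph $G'$ of $G$ (delete vertices and edges while staying non-NIC-planar), $G'$ must then have more than $n_0$ vertices.

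To build such a $G$ I would simply take, for the given $n_0$, any $7$-regular graph of girth greater than $n_0$; the existence of regular graphs of prescribed degree and arbitrarily large girth is classical (Erd\H{o}s--Sachs, or explicit Cayley/lift constructions). This $G$ works for two independent reasons. First, $\delta(G)=7$, so $G$ has no vertex of degree at most $6$, and hence $G$ is \emph{not} NIC-planar by Theorem \ref{6-degenerate}. Second, if $H$ is any subgraph of $G$ with $v(H)\le n_0$, then $H$ is acyclic: a cycle in $H$ would have length at least the girth of $G$, hence at least $\mathrm{girth}(G)>n_0\ge v(H)$ vertices, which is impossible. So $H$ is a forest, therefore planar, therefore NIC-planar. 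By the reduction of the previous paragraph, $G$ contains a minimal non-NIC-planar graph on more than $n_0$ vertices, and letting $n_0\to\infty$ produces infinitely many pairwise non-isomorphic minimal non-NIC-planar graphs.

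In this approach there is really no hard step: the argument rests only on Theorem \ref{6-degenerate} together with the trivial fact that a graph with fewer vertices than its girth is acyclic, the sole imported ingredient being the standard existence of high-girth $7$-regular graphs. If one instead insists on an \emph{explicit} family, the natural route is to start from the small obstruction $K_{1,1,1,1,2}$ and repeatedly substitute a suitable gadget for one of its vertices, keeping the graph non-NIC-planar (certified by the crossing-number bound $cr(G)\le\frac{3}{5}(v(G)-2)$ of Theorem \ref{cross-thm} and Lemma \ref{cross-Kn}) while destroying every smaller obstruction; there the real difficulty is the bookkeeping needed to verify minimality at each step, and that is precisely what the high-girth construction sidesteps.
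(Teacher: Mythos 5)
Your proof is correct, but it takes a genuinely different route from the paper. The paper argues by contradiction: assuming a maximum order $M$ of minimum non-NIC-planar graphs, it subdivides every edge of $K_{4M}$ with $M$ new vertices, shows the resulting graph violates the bound $cr(G)\leq\frac{3}{5}(v(G)-2)$ of Theorem \ref{cross-thm} via the lower bound $cr(K_n)\geq 0.8594\,Z(n)$ of Lemma \ref{cross-Kn} (subdivision preserves crossing number), and then uses the claim that a minimum obstruction is $2$-connected, hence must swallow an entire subdivided edge and so has at least $M+2$ vertices. You instead take, for each $n_0$, a $7$-regular graph of girth exceeding $n_0$: it is non-NIC-planar by the degeneracy bound of Theorem \ref{6-degenerate}, while every subgraph on at most $n_0$ vertices is a forest (fewer vertices than the girth forces acyclicity), hence planar and NIC-planar, so any subgraph-minimal obstruction inside it has more than $n_0$ vertices; unbounded order then gives infinitely many. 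Your route avoids the crossing-number machinery entirely and, notably, also avoids the paper's unargued assertion that minimum non-NIC-planar graphs are $2$-connected, replacing it with the trivial girth observation; the price is importing the classical Erd\H{o}s--Sachs existence of high-girth regular graphs, an external ingredient, whereas the paper's construction is explicit and self-contained up to the cited bound on $cr(K_n)$. Your argument also transfers verbatim to the IC-planar case (Theorem \ref{nonIC}), since IC-planar graphs likewise have a vertex of degree at most $6$.
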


\begin{proof}
Suppose that there are finite many minimum non-NIC-planar graphs. Let $M$ be the maximum order of those minimum non-NIC-planar graphs. Since every 5-vertex graph is NIC-planar by Theorem \ref{1-part}, $M\geq 6$. We now place $M$ vertices of degree 2 on each edge of the complete graph $K_{4M}$. It is easy to see that the resulted graph $G$ is a graph with $v(G)=4M+2M(4M-1)M<2M(2M-1)(2M+1)<M(2M-1)^2(M-1)<\frac{5}{3}cr(K_{4M})=\frac{5}{3}cr(G)$ by Lemma \ref{cross-Kn}. Therefore, $G$ is non-NIC-planar by Theorem \ref{cross-thm}, and moreover, $G$ contains as a subgraph a minimum non-NIC-planar graph $H$. Since $H$ shall be 2-connected, $H$ contains at least one subdivided edge of $G$ in its entirety, which implies that $v(H)\geq M+2$. This contradicts the definition of $M$.
\end{proof}

By similar arguments as in the proof of Theorem \ref{non-NIC}, we can also prove the following. Note that the notion of minimum non-IC-planar graph can be defined similarly to the one of minimum non-NIC-planar graph. In fact, Theorems \ref{non-NIC} and \ref{nonIC} support Conjecture \ref{conj} in some sense.

\begin{thm}\label{nonIC}
There are infinite many minimum non-IC-planar graphs.
\end{thm}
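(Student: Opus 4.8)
The plan is to follow the proof of Theorem~\ref{non-NIC} almost verbatim, replacing ``NIC-planar'' by ``IC-planar'' throughout. Two cheap observations provide the needed IC-analogues. First, since every IC-planar graph is NIC-planar, Theorem~\ref{cross-thm} already gives $cr(G)\le\frac{3}{5}(v(G)-2)$ for every IC-planar graph $G$ (one could instead use the sharper bound $cr(G)\le\frac14 v(G)$, valid because the crossings of an IC-planar drawing are pairwise vertex-disjoint, so a drawing with $c$ crossings uses $4c\le v(G)$ vertices). Second, every graph on at most $5$ vertices is IC-planar, since $K_5$ has a drawing with a single -- hence trivially independent -- crossing and IC-planarity passes to subgraphs.

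Assume for contradiction that there are only finitely many minimum non-IC-planar graphs, and let $M$ be the largest order among them; by the second observation $M\ge 6$. As in Theorem~\ref{non-NIC}, subdivide every edge of $K_{4M}$ by placing $M$ vertices of degree $2$ on it, obtaining a graph $G$ with $v(G)=4M+2M^2(4M-1)$ and $cr(G)=cr(K_{4M})$. The same elementary estimates used there, together with Lemma~\ref{cross-Kn}, give $v(G)<M(M-1)(2M-1)^2=Z(4M)<\frac{5}{3}cr(K_{4M})=\frac{5}{3}cr(G)$, hence $cr(G)>\frac{3}{5}v(G)\ge\frac{3}{5}(v(G)-2)$. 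By Theorem~\ref{cross-thm} this forces $G$ to be non-NIC-planar, and a fortiori non-IC-planar.

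Since $G$ is non-IC-planar, it contains a minimum non-IC-planar subgraph $H$. A non-IC-planar graph is non-planar, so $H$ contains a cycle; but in $G$ every edge of $K_{4M}$ has been replaced by an internally disjoint path on $M$ interior vertices, so any cycle of $G$ comes from a cycle of $K_{4M}$ (of length at least $3$) by blowing up its edges, and therefore has at least $3(M+1)=3M+3$ vertices. Thus $v(H)\ge 3M+3>M$, contradicting the choice of $M$. Alternatively one can copy Theorem~\ref{non-NIC} exactly and deduce $v(H)\ge M+2$ from the $2$-connectivity of $H$; that route needs the IC-analogue of the block lemma -- gluing two IC-planar graphs at a vertex is IC-planar -- which is the only genuinely new point, and the step I would expect to take the most care, since for IC-planarity two crossings must be \emph{completely} vertex-disjoint rather than share up to one vertex. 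The cycle argument sidesteps this.
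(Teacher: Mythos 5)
Your proposal is correct, and its skeleton is exactly the paper's: the paper proves Theorem~\ref{nonIC} only by the remark ``by similar arguments as in the proof of Theorem~\ref{non-NIC}'', i.e.\ the same subdivided $K_{4M}$, the same count $v(G)<Z(4M)<\frac{5}{3}cr(K_{4M})=\frac{5}{3}cr(G)$ via Lemma~\ref{cross-Kn}, and the same appeal to the crossing-number bound (your observation that $cr(G)\le\frac{3}{5}(v(G)-2)$, or even $cr(G)\le\frac14 v(G)$, holds for IC-planar graphs is precisely what makes the transfer work). Where you genuinely differ is the last step: the paper bounds $v(H)$ from below by asserting that a minimum non-(N)IC-planar graph ``shall be 2-connected'', which tacitly uses that IC-planarity (resp.\ NIC-planarity) is preserved when gluing drawings at a cut vertex or taking disjoint unions -- a small lemma the paper never states. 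Your cycle argument (a minimum non-IC-planar graph is non-planar, hence contains a cycle, and every cycle of the subdivided graph has at least $3(M+1)$ vertices) sidesteps that unproved gluing step entirely and even yields the stronger bound $v(H)\ge 3M+3$ instead of $M+2$; it is the cleaner route, at the cost of nothing. The numerical estimates you quote ($v(G)=4M+2M^{2}(4M-1)<M(M-1)(2M-1)^{2}=Z(4M)$ for $M\ge 6$, and $cr(G)=cr(K_{4M})$ since subdivision preserves crossing number) all check out, so the proof is complete as written.
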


Since $K_5$ has an IC-planar drawing on the plane with only crossing, every 5-vertex graph is IC-planar and thus is NIC-planar.

\begin{thm}\label{min}
$G:=K_6-K_2$ is non-NIC-planar and $H:=G-K_2$ is IC-planar.
\end{thm}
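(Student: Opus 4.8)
The plan is to identify $G$ and $H$ with complete multipartite graphs that were already settled earlier in this section. Since $G=K_6-K_2$ is $K_6$ with a single edge deleted, the two endpoints of the deleted edge form the unique non-adjacent pair of vertices of $G$, so the non-adjacency relation on $V(G)$ partitions the six vertices into one pair and four singletons; that is, $G\cong K_{2,1,1,1,1}=K_{1,1,1,1,2}$. By the proof of Theorem~\ref{5-part}, $K_{2,1,1,1,1}$ is non-NIC-planar, and hence so is $G$. For the second assertion, take $H=G-K_2$ to be $G$ with a second edge deleted that shares no endpoint with the edge already removed from $K_6$; then $H$ is $K_6$ with two independent edges removed, the non-adjacency relation on $V(H)$ is a union of two pairs and two singletons, and so $H\cong K_{1,1,2,2}$. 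By the proof of Theorem~\ref{4-part}, $K_{1,1,2,2}$ has a drawing with exactly one crossing, and any drawing with a single crossing is trivially IC-planar (the independence condition $|S(c_1)\cap S(c_2)|=0$ is vacuous and each edge is crossed at most once), so $H$ is IC-planar.

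If one prefers a proof of the first half that does not quote Theorem~\ref{5-part}, one can argue directly: $v(G)=6$ and $e(G)=14$, and deleting one edge from each crossing of an optimal drawing of $G$ yields a planar graph, so $e(G)-cr(G)\le 3v(G)-6=12$, which forces $cr(G)\ge 2$. Were $G$ NIC-planar, Lemma~\ref{cross-6-vertex} would give $cr(G)\le 1$, a contradiction; hence $G$ is non-NIC-planar. Similarly, if one wishes to avoid citing Theorem~\ref{4-part}, one simply exhibits the explicit one-crossing drawing of $K_{1,1,2,2}$.

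The proof has essentially no real obstacle: the only thing that needs care is the combinatorial bookkeeping that identifies $K_6$ minus one edge (respectively, minus two independent edges) with $K_{1,1,1,1,2}$ (respectively, with $K_{1,1,2,2}$), after which both halves are immediate consequences of results established earlier. It is worth noting that, since every $5$-vertex graph is NIC-planar by Theorem~\ref{1-part}, this $G$ is a minimum non-NIC-planar graph of smallest possible order, and $H=G-K_2$ witnesses that deleting a suitable edge from $G$ produces not merely an NIC-planar but even an IC-planar graph.
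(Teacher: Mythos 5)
Your first half is correct and essentially parallel to the paper's: the paper deduces $cr(K_6-K_2)\geq 2$ from $cr(K_6)=3$ and then applies Lemma~\ref{cross-6-vertex}, while you obtain $cr(G)\geq 2$ either through the identification $K_6-K_2\cong K_{1,1,1,1,2}$ and the proof of Theorem~\ref{5-part} (which rests on Lemma~\ref{cross-K1111n}), or through the elementary count $e(G)-cr(G)\leq 3v(G)-6$, i.e.\ $14-cr(G)\leq 12$; both routes are sound, and the counting version is actually more self-contained than the paper's appeal to $cr(K_6)=3$.

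The gap is in the second half. You \emph{choose} the second deleted edge to be disjoint from the first, so you only prove that $K_6-2K_2\cong K_{1,1,2,2}$ is IC-planar. But $H=G-K_2$ means deletion of an arbitrary edge of $G$, and there are two isomorphism types: besides $K_6-2K_2$ there is $K_6-P_3$, arising when the second edge shares an endpoint with the first. The paper's proof treats both cases explicitly, exhibiting IC-planar drawings of $K_6-P_3$ and $K_6-2K_2$ in Fig.~\ref{K6-K2-K2}, and both cases are genuinely needed: the purpose of Theorem~\ref{min} is Corollary~\ref{K6-K2}, which asserts that $K_6-K_2$ is a \emph{minimum} non-NIC-planar (and non-IC-planar) graph, i.e.\ that $(K_6-K_2)-e$ is NIC-planar for \emph{every} edge $e$. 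Moreover, $K_6-P_3$ is not a complete multipartite graph (its complementary edges form a path on three vertices, not a disjoint union of cliques), so none of the multipartite results you quote applies to it; its IC-planarity must be established separately, e.g.\ by an explicit drawing as in the paper, and this case is missing from your proposal. Your treatment of the case you do cover is fine: citing the one-crossing drawing of $K_{2,2,1,1}$ from the proof of Theorem~\ref{4-part} and noting that a single crossing is automatically IC-planar is a legitimate shortcut.
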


\begin{figure}
  % Requires \usepackage{graphicx}
  \begin{center}
  \includegraphics[width=6.5cm,height=3.2cm]{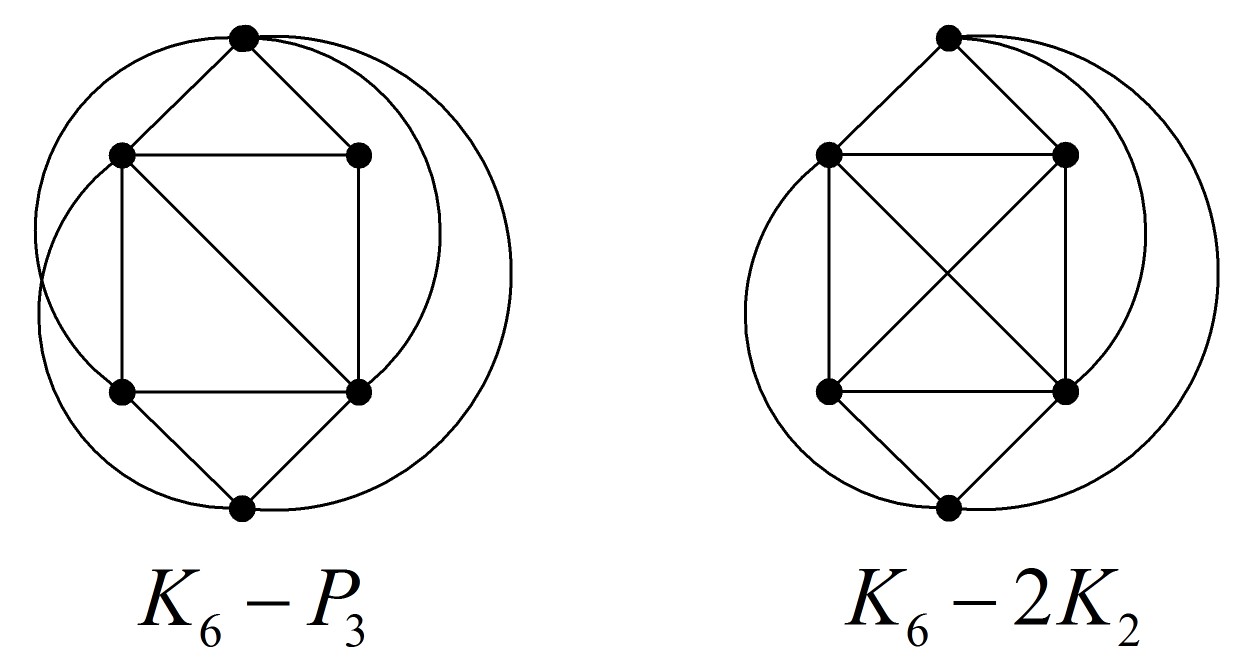}
\\\end{center}
  \caption{IC-planar drawings of $G-K_2$  }\label{K6-K2-K2}
\end{figure}

\begin{proof}
Since $cr(K_6)=Z(6)=3$ (see \cite{Guy}), $cr(K_6-K_2)\geq 2$. If $K_6-K_2$ is NIC-planar, then $cr(K_6-K_2)\leq 1$ by Lemma \ref{cross-6-vertex}, a contradiction. Therefore, $K_6-K_2$ is non-NIC-planar, and thus is non-IC-planar.
Since $G-K_2$ is either the graph $K_6-P_3$ or the graph $K_6-2K_2$ and Fig.\,\ref{K6-K2-K2} shows the IC-planar drawings of these two graphs, $G-K_2$ is IC-planar and is NIC-planar.
\end{proof}

By Theorem \ref{min}, we have the following immediate corollary.

\begin{cor}\label{K6-K2}
$K_6-K_2$ is the unique minimum non-NIC-planar $6$-vertex graph and is also the unique minimum non-IC-planar $6$-vertex graph.
\end{cor}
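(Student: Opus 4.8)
The plan is to derive everything from Theorem~\ref{min}, Theorem~\ref{1-part} and Lemma~\ref{cross-6-vertex}, together with the trivial remark that a graph which can be drawn with at most one crossing is IC-planar (hence NIC-planar), since a one-crossing drawing has no pair of crossings to violate any independence condition. Combining this remark with Lemma~\ref{cross-6-vertex} (an NIC-planar drawing on at most six vertices has at most one crossing, since two crossings would occupy at least seven vertices) gives the basic dichotomy I would record first: on at most six vertices, a graph is NIC-planar if and only if its crossing number is at most $1$, and the same equivalence holds for IC-planarity. In particular the IC-planar and NIC-planar six-vertex graphs coincide, so the two halves of the corollary have identical proofs and I would write only the NIC-planar one.

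Second, I would pin down the complete list of six-vertex non-NIC-planar graphs. By Theorem~\ref{min} and Theorem~\ref{1-part}, $K_6-K_2$ and $K_6$ are non-NIC-planar; they have $14$ and $15$ edges and are the only six-vertex graphs with at least $14$ edges. No six-vertex graph $H$ with $e(H)\le 13$ can be non-NIC-planar: then $\overline H$ has at least two edges, and any two edges span either a $P_3$ or a $2K_2$, so $H$ is a spanning subgraph of $K_6-P_3$ or of $K_6-2K_2$; both of the latter are IC-planar by Theorem~\ref{min} and Fig.~\ref{K6-K2-K2}, hence NIC-planar, hence have crossing number at most $1$ by Lemma~\ref{cross-6-vertex}, and since the crossing number is monotone under subgraphs, $cr(H)\le 1$ and $H$ is NIC-planar. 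Thus the six-vertex non-NIC-planar graphs are exactly $K_6$ and $K_6-K_2$.

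Third, I would verify minimality and uniqueness. Every edge-deletion of $K_6-K_2$ is isomorphic to $K_6-P_3$ or to $K_6-2K_2$, both NIC-planar by Theorem~\ref{min}, and every vertex-deletion is a five-vertex graph, which is NIC-planar by Theorem~\ref{1-part}; so $K_6-K_2$ is a minimum non-NIC-planar graph. Conversely, a minimum non-NIC-planar six-vertex graph is in particular non-NIC-planar, so by the previous step it is $K_6$ or $K_6-K_2$; but $K_6-K_2=K_6-e$ is itself non-NIC-planar, so $K_6$ is not minimum, leaving $K_6-K_2$ as the only possibility. Replacing ``NIC-planar'' by ``IC-planar'' throughout gives the second assertion word for word.

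I do not expect a genuine obstacle here: the whole argument is bookkeeping over results already in hand. The only points needing a line of justification are the dichotomy of the first step, the elementary observation that any two edges span a $P_3$ or a $2K_2$, and subgraph-monotonicity of the crossing number — all routine.
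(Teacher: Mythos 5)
Your proposal is correct and follows essentially the route the paper intends: the paper treats the corollary as immediate from Theorem~\ref{min} (non-NIC-planarity of $K_6-K_2$, NIC/IC-planarity of $K_6-P_3$ and $K_6-2K_2$) together with Theorem~\ref{1-part} and Lemma~\ref{cross-6-vertex}, and your write-up just makes the implicit bookkeeping explicit (any $6$-vertex graph with at most $13$ edges lies in $K_6-P_3$ or $K_6-2K_2$, and $K_6$ is not edge-minimal). The only cosmetic difference is that you route some steps through crossing-number monotonicity where the paper would simply invoke that subgraphs of NIC-planar (IC-planar) graphs are NIC-planar (IC-planar); both are fine.
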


\begin{thm}\label{use-NIC}
$K_{3,5}$ and $K_{2,2,3}$ are both minimum non-NIC-planar graphs.
\end{thm}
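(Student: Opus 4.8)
The plan is to show two things for each of $K_{3,5}$ and $K_{2,2,3}$: first, that the graph is non-NIC-planar, and second, that it is \emph{minimal} with this property, i.e.\ deleting any single edge yields an NIC-planar graph. The non-NIC-planarity is already essentially in hand: $K_{3,5}$ contains $K_{3,5}$ trivially and we showed in Theorem \ref{2-part} that $K_{5,3}$ is not NIC-planar; $K_{2,2,3}$ contains $K_{3,2,2}$, which is not NIC-planar by Theorem \ref{3-part}. So the substantive content is the minimality, and that is where the work lies.

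For the minimality of $K_{3,5}$, I would use edge-transitivity: the automorphism group of $K_{3,5}$ acts transitively on its $15$ edges, so $K_{3,5}-e$ is, up to isomorphism, a single graph, and it suffices to exhibit one NIC-planar drawing of it. I would produce such a drawing explicitly (a figure), taking $K_{4,3}$ — which has an NIC-planar drawing by Theorem \ref{2-part} and Fig.\,\ref{K34} — as a guide: $K_{3,5}-e$ is $K_{3,5}$ with one vertex of the $5$-side dropped to degree $2$, so one adds two extra edges from a new degree-$2$ vertex into an NIC-planar drawing of $K_{4,3}$ (itself obtained from $K_{3,3}$ plus a vertex), routing them through faces so that no edge is crossed twice and no two crossings share a cluster-vertex. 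For $K_{2,2,3}$, the edge set splits into three orbits under the automorphism group — edges joining the two singleton-size-$2$ parts, edges joining a size-$2$ part to the size-$3$ part, and there are no edges inside a part — actually the two types are: a $\{2\}$–$\{2\}$ edge and a $\{2\}$–$\{3\}$ edge. So I would need to exhibit NIC-planar drawings of $K_{2,2,3}-e$ for a representative $e$ of each of these two types; each such graph has $7$ vertices and $11$ edges, and by Theorem \ref{edge-thm} the edge bound $18(v-2)/5=18$ is no obstruction, so a drawing with at most two crossings and the NIC condition should be constructible by modifying the $K_{4,2,1}$ drawing of Fig.\,\ref{K421} or the implicit $K_{3,2,2}$-with-two-crossings configuration analyzed in the proof of Theorem \ref{3-part}.

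The order of steps: (1) record non-NIC-planarity of both graphs by citing Theorems \ref{2-part} and \ref{3-part}; (2) identify the edge-orbits of $K_{3,5}$ (one orbit) and of $K_{2,2,3}$ (two orbits) under the respective automorphism groups, so that only one, resp.\ two, deletions need be checked; (3) for each orbit representative, give an explicit NIC-planar drawing in a figure and verify by inspection the two defining properties — each edge crossed at most once, and any two crossings $c_1,c_2$ satisfy $|S(c_1)\cap S(c_2)|\le 1$; (4) conclude that in every case $K_{3,5}-e$ and $K_{2,2,3}-e$ are NIC-planar, hence $K_{3,5}$ and $K_{2,2,3}$ are minimum non-NIC-planar graphs.

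The main obstacle is step (3): finding the explicit drawings and, more delicately, checking the cluster-set condition $|S(c_1)\cap S(c_2)|\le 1$ rather than merely $1$-planarity. Since $K_{3,5}-e$ has $8$ vertices and its NIC-planar drawing will have at least (and, since $cr(K_{3,5}-e)$ is likely $3$) exactly three crossings, the three cluster sets of size $4$ must pairwise intersect in at most one vertex, which forces them to use $3+3+3-\binom{3}{2}\cdot 1 \le$ a tight budget out of $8$ vertices — so the configuration is rigid and one must be careful. A clean way to certify correctness is to display the associated plane graph $\gx$ and check directly that no two false $4$-vertices lie on a common face path sharing two true neighbours; I would present the verification face-by-face for each figure. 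The $K_{2,2,3}-e$ cases are smaller ($7$ vertices, two crossings) and should follow from a mild perturbation of existing figures, so the genuinely new drawing to be designed with care is the one for $K_{3,5}-e$.
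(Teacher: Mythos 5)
Your skeleton is exactly the paper's proof: non-NIC-planarity of $K_{3,5}$ and $K_{2,2,3}$ is quoted from Theorems \ref{2-part} and \ref{3-part}, edge-transitivity reduces $K_{3,5}-e$ to a single graph, the two edge types of $K_{2,2,3}$ ($e_{22}$ and $e_{23}$) give two deletions to check, and the proof then consists of exhibiting NIC-planar drawings (the paper's Fig.~\ref{min-non-NIC}). However, the entire substance of the argument is those explicit drawings, which you only promise to produce, and your guiding analysis of the one case you flag as genuinely new ($K_{3,5}-e$) is based on a wrong estimate that, if taken seriously, would derail the construction. You assert that an NIC-planar drawing of $K_{3,5}-e$ will have (exactly) three crossings, guessing $cr(K_{3,5}-e)=3$. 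But no $8$-vertex graph admits an NIC-planar drawing with three crossings: this is precisely Lemma \ref{cross-8-vertex}, and it is also what your own cluster-set budget gives when carried out correctly --- three $4$-sets pairwise sharing at most one vertex need at least $4+4+4-3=9>8$ vertices, so the configuration is not ``rigid'' but impossible. Hence, if $cr(K_{3,5}-e)$ were really $3$, then $K_{3,5}-e$ would itself be non-NIC-planar and the minimality claim you are trying to prove would be false for that edge.

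What actually happens is that $cr(K_{3,5}-e)=2$: it contains $K_{3,4}$, whose crossing number is $2$ by Lemma \ref{cross-Kmn}, and a two-crossing drawing exists. So the correct target is a two-crossing drawing in which the two cluster sets share at most one vertex --- this is what the paper's figure supplies (e.g.\ by attaching the degree-$2$ vertex of $K_{3,5}-e$ to a suitable NIC-planar drawing of $K_{3,4}$ without creating new crossings). Your plan for the $K_{2,2,3}-e$ cases (two orbit representatives, seven vertices, two crossings, obtained by perturbing the drawings around Theorem \ref{3-part}) is in line with the paper, but again the verification is deferred to figures you have not drawn. In short: right strategy, but the crossing-count premise for $K_{3,5}-e$ must be corrected to two crossings, and the proof is incomplete until the drawings themselves (with the check that every edge is crossed at most once and any two crossings share at most one cluster vertex) are actually exhibited.
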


\begin{figure}
  % Requires \usepackage{graphicx}
  \begin{center}
  \includegraphics[width=9cm,height=3.5cm]{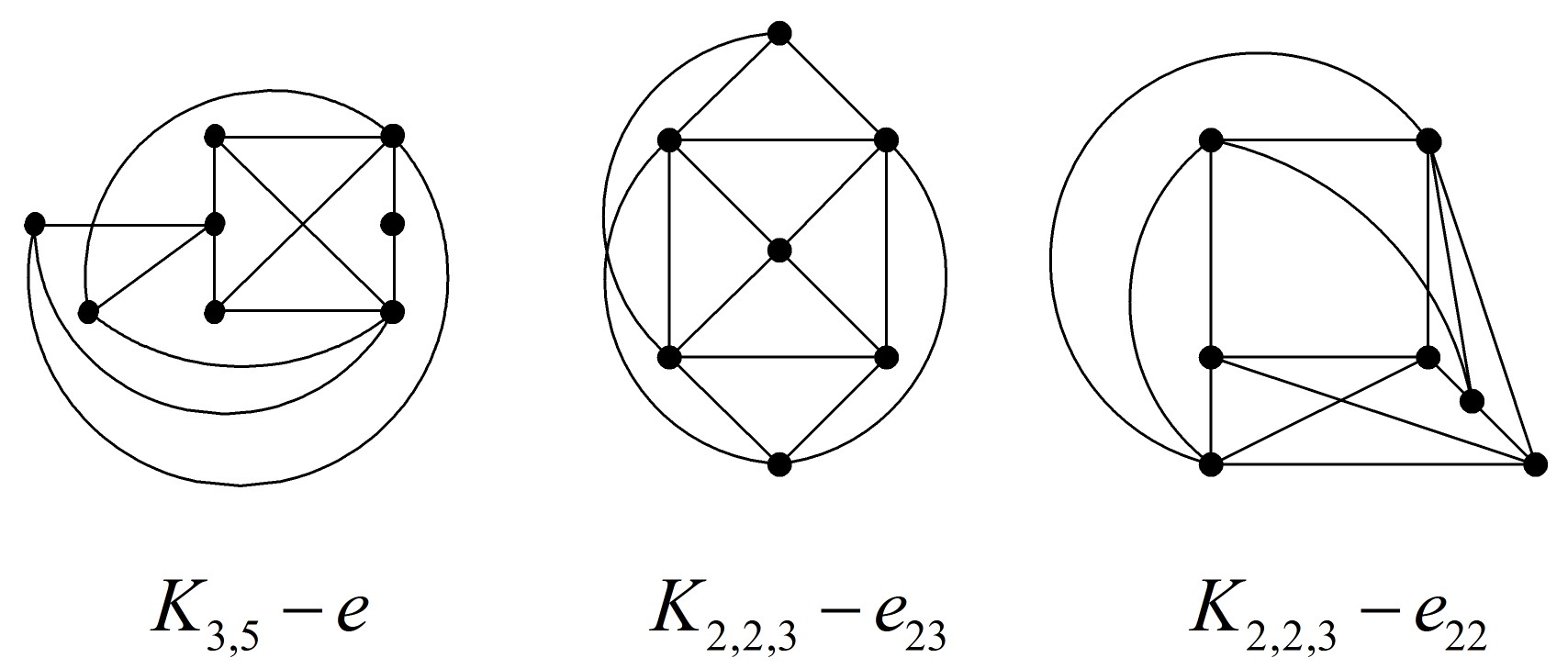}
\\\end{center}
  \caption{$K_{3,5}-e$ and $K_{2,2,3}-e$ are both NIC-planar}\label{min-non-NIC}
\end{figure}

\begin{proof}
First, $K_{3,5}$ and $K_{2,2,3}$ are non-NIC-planar by Theorems \ref{2-part} and \ref{3-part}. On the other hand, $K_{3,5}-e$ and $K_{2,2,3}-e$ are both NIC-planar, see Fig.\,\ref{min-non-NIC}. Note that $K_{2,2,3}-e$ would represent two graphs: one is obtained from $K_{2,2,3}$ by removing an edge $e_{22}$ between the two 2-vertex parts, and the other is obtained from $K_{2,2,3}$ by removing an edge $e_{23}$ between one 2-vertex part and one 3-vertex part.
\end{proof}

By Corollary \ref{non-NIC-mul}, Corollary \ref{K6-K2} and Theorem \ref{use-NIC}, we have the following result. Note that $K_6-K_2\cong K_{1,1,1,1,2}$.

\begin{cor}
There are only three minimum non-NIC-planar complete multipartite graphs: $K_{3,5}$, $K_{2,2,3}$ and $K_{1,1,1,1,2}$.
\end{cor}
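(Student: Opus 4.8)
The plan is to combine three facts already established: (i) Corollary~\ref{non-NIC-mul}, which says every non-NIC-planar complete multipartite graph contains one of $K_{3,5}$, $K_{2,2,3}$, or $K_{1,1,1,1,2}$; (ii) Theorem~\ref{use-NIC}, which tells us $K_{3,5}$ and $K_{2,2,3}$ are minimum non-NIC-planar; and (iii) Corollary~\ref{K6-K2}, which identifies $K_6-K_2\cong K_{1,1,1,1,2}$ as minimum non-NIC-planar. So the candidate list of minimum non-NIC-planar complete multipartite graphs is at most $\{K_{3,5},\,K_{2,2,3},\,K_{1,1,1,1,2}\}$, and each of these three really is minimum non-NIC-planar. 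The only thing left to argue is that \emph{no other} complete multipartite graph is a minimum non-NIC-planar graph, i.e.\ that the three listed graphs are pairwise non-comparable under the subgraph order restricted to complete multipartite graphs, so that a larger non-NIC-planar complete multipartite graph always contains one of the three \emph{as a proper subgraph} and hence cannot itself be minimum.

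First I would recall the definition: $H'$ is \emph{minimum non-NIC-planar} if $H'$ is non-NIC-planar but $H'-e$ is NIC-planar for every edge $e$. Suppose $K$ is a minimum non-NIC-planar complete multipartite graph. Since $K$ is non-NIC-planar, by Corollary~\ref{non-NIC-mul} it contains a subgraph isomorphic to one of $K_{3,5}$, $K_{2,2,3}$, $K_{1,1,1,1,2}$; call it $H$. If $K$ has strictly more edges than $H$, pick an edge $e\in E(K)\setminus E(H)$; then $H\subseteq K-e$, so $K-e$ is non-NIC-planar, contradicting minimality of $K$. Hence $E(K)=E(H)$, and since both are graphs on their respective vertex sets with no isolated vertices (complete multipartite graphs with at least one edge have none), we get $K\cong H$. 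Therefore $K$ is one of the three listed graphs, and by (ii) and (iii) all three are indeed minimum non-NIC-planar. This establishes exactly the claimed list.

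The main obstacle — really the only subtle point — is making the step ``$E(K)=E(H)$ forces $K\cong H$'' airtight, since $H$ is only guaranteed as a \emph{subgraph}, not necessarily a spanning one, so a priori $K$ could have extra isolated vertices on top of $H$. Here one uses that $K$ is complete multipartite: if $K$ had a vertex $v$ not in $V(H)$, then $v$ is adjacent to every vertex outside its own part, and since $H$ is connected with at least $7$ vertices this would create edges of $K$ outside $E(H)$, again contradicting $E(K)=E(H)$ via the argument of the previous paragraph. So $V(K)=V(H)$ and $E(K)=E(H)$, giving $K\cong H$. Everything else is just citing the earlier corollaries and Theorem~\ref{use-NIC}; no new drawing or discharging argument is needed.
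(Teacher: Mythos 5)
Your proposal is correct and follows the same route as the paper, which simply cites Corollary~\ref{non-NIC-mul}, Theorem~\ref{use-NIC} and Corollary~\ref{K6-K2} (noting $K_6-K_2\cong K_{1,1,1,1,2}$); you merely spell out the routine minimality step (that a minimum non-NIC-planar complete multipartite graph cannot properly contain one of the three) which the paper leaves implicit.
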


\begin{figure}
  % Requires \usepackage{graphicx}
  \begin{center}
  \includegraphics[width=3cm,height=3cm]{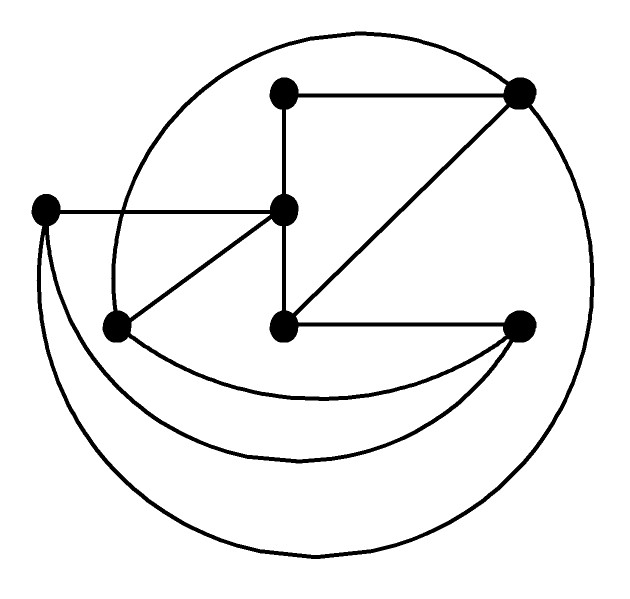}
\\\end{center}
  \caption{$K_{3,4}-e$ is IC-planar}\label{K34-e}
\end{figure}

For IC-planar complete multipartite graphs, we know that $K_{3,4}$ is non-IC-planar and $K_{3,4}-e$ has an IC-planar drawing (see Fig.\,\ref{K34-e}). Therefore,  $K_{3,4}$ is a minimum non-IC-planar graph. Hence, by Corollary \ref{non-IC-mul} and Corollary \ref{K6-K2}, we deduce the following.

\begin{cor}
There are only two minimum non-IC-planar complete multipartite graphs: $K_{3,4}$ and $K_{1,1,1,1,2}$.
\end{cor}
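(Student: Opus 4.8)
The plan is to read the corollary off from Corollary~\ref{non-IC-mul} together with the two minimality facts that have already been isolated. First I would observe that $K_{3,4}$ is itself a minimum non-IC-planar graph: it is non-IC-planar (as recorded just above), while $K_{3,4}-e$ is IC-planar by Fig.~\ref{K34-e}; since $K_{3,4}$ is edge-transitive, this single deletion already shows that every single-edge-deleted subgraph of $K_{3,4}$ is IC-planar, so $K_{3,4}$ is minimum non-IC-planar. Second, Corollary~\ref{K6-K2} tells us that $K_6-K_2\cong K_{1,1,1,1,2}$ is the unique minimum non-IC-planar $6$-vertex graph, hence in particular a minimum non-IC-planar graph. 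So both $K_{3,4}$ and $K_{1,1,1,1,2}$ really do occur, and only the ``no others'' direction remains.

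For that direction, let $H$ be any minimum non-IC-planar complete multipartite graph. Since $H$ is non-IC-planar, Corollary~\ref{non-IC-mul} provides a copy of $K_{3,4}$ or a copy of $K_{1,1,1,1,2}$ inside $H$; fix such a copy $K$. Suppose $H\neq K$. Being complete multipartite with at least one edge, $H$ has at least two parts, so no vertex of $H$ is isolated; hence if $H$ has a vertex outside $V(K)$ that vertex is incident with an edge of $H$ not lying in $K$, and otherwise $V(H)=V(K)$ and, $H$ being different from $K$ while containing all the edges of $K$, $H$ has an edge outside $K$. In either case there is an edge $e\in E(H)$ not in $K$, so $H-e$ still contains $K$ and is therefore non-IC-planar, contradicting the minimality of $H$. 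Thus $H=K$, i.e.\ $H\in\{K_{3,4},K_{1,1,1,1,2}\}$, which proves the corollary.

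I do not expect a genuine obstacle here: all the hard structural content (the finite ``obstruction'' list in Corollary~\ref{non-IC-mul}, the explicit IC-planar drawings, and the $6$-vertex analysis behind Corollary~\ref{K6-K2}) has already been established, and what is left is the routine edge-minimality argument above. The only points that need a moment's care are fixing the meaning of ``minimum non-IC-planar graph'' (non-IC-planar, but $H-e$ IC-planar for \emph{every} edge $e$) and checking that a proper complete-multipartite supergraph of $K_{3,4}$ or of $K_{1,1,1,1,2}$ genuinely has a deletable edge lying outside the chosen copy — which is exactly the elementary observation about isolated vertices and vertex partitions used above.
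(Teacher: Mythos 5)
Your proposal is correct and follows essentially the same route as the paper: establish that $K_{3,4}$ is a minimum non-IC-planar graph via the drawing of $K_{3,4}-e$, invoke Corollary~\ref{K6-K2} for $K_{1,1,1,1,2}$, and combine with Corollary~\ref{non-IC-mul}. You merely spell out explicitly the minimality step (a proper complete multipartite supergraph of either obstruction has a deletable edge outside the copy) that the paper leaves implicit.
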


\end{document}